\documentclass[11 pt, onecolumn, draftcls]{IEEEtran}

%\documentclass[english,a4paper,11pt]{article}
%\documentclass[11 pt, onecolumn, draftcls]{IEEEtran} %TSP
%\documentclass[12pt,draftcls,onecolumn]{IEEEtran}
%\documentclass[twocolumn]{IEEEtran}
%\renewcommand{\baselinestretch}{1.5}
%\documentclass[10pt,final,journal,twocolumn]{IEEEtran}
%\documentclass[twocolumn]{IEEEtran}
%\ExecuteOptions{letterpaper,10pt,twocolumn,oneside,final,journal}
%\documentclass[onecolumn,12pt,draftcls]{IEEEtran} - T-AC
%\documentclass[journal]{IEEEtran}%

%\documentclass[12pt,draftcls,onecolumn]{IEEEtran}% Augusto
%\documentclass[12pt,draftcls,onecolumn]{IEEEtran}

%\documentclass[twocolumn]{IEEEtran}%%% THIS for 2 column
%
%%%%%%%%%%%%%%%%%%%%%%%%%%%%%%%%%%%%%%%%%%%%%%%%%%%%%%%%%%%Soummya Kar frontmatter%%%%%%%%%%%%%%%%%%%%%%%%%%%%%%%%%%%%%%%%%%%%%%%%%%%%%%%%%%
\usepackage{amsbsy,color,multicol}
\usepackage{floatflt} % text flows around figures

\usepackage{amsmath}
\usepackage{amssymb}
\usepackage{times}
\usepackage{graphicx}
\usepackage{xspace}
\usepackage{paralist} % compact and in-paragraph* lists
\usepackage{setspace} % buggy somehow
\usepackage{xypic}
\xyoption{curve}
%\xyoption{graph}
\usepackage{latexsym}
\usepackage{amsthm}
\usepackage{ifthen}
%\usepackage{subfigure}%% Seems to create problems?
%%%%%%%%%%%Augusto
\usepackage{caption}
\usepackage{subcaption}
\usepackage{makeidx,multirow,textcomp,longtable,afterpage,setspace}
\usepackage[english]{babel}
\usepackage[latin1]{inputenc}
%\onehalfspacing
%%%%%%%%%%%%Augusto end
\usepackage[ruled,vlined]{algorithm2e}
\usepackage{booktabs}
%\usepackage{mathtools}
%%%JM added this package url
\usepackage{comment}
\usepackage{url}
%\mathtoolsset{showonlyrefs,showmanualtags}

\lightrulewidth=0.03em
%\renewcommand{\topfraction}{0.99}
%\renewcommand{\bottomfraction}{0.99}
%\renewcommand{\textfraction}{0}
%\renewcommand{\floatpagefraction}{0.9}

%%%%%%%%%%%%%%%%%%%%%%%%%%%%%%%%break equations multiline equations
\newlength\myindent
\newlength\mycolwid

% syntax \MyText[<length>]{<text>} (default value for the optional argument: 0pt)

% a command to generate filler text for the example
%\newcommand\test{text text text text text text text text text text text text text text text text text text text text text text text text text text text text text text text text text}
%\begin{document}
%
%\begin{align}
%  a &= b \\
%  \intertext{\MyText{\test}}
%    &=c
%  \intertext{\MyText[1in]{\test}}
%   &= d
%  \intertext{\MyText[2in]{\test}}
%   &= e.
%\end{align}
%%%%%%%%%%%%%%%%%%%%%%%%%%%%%%%%%%%%%%%%%%%%%%%%%%%%%%%%%%%%%%%%%%%%%

%\setlength{\baselineskip}{4.58mm}
%\setlength{\abovedisplayskip}{1.0mm}
%\setlength{\belowdisplayskip}{1.0mm}
% emphasize

% comments in the text

% bold paragraph* titles

%\setlength{\oddsidemargin}{0mm}
\setlength{\textwidth}{6.5in}
\topmargin       -6.0mm
 \oddsidemargin      0mm
 \evensidemargin   0mm
 \textheight     9in
 \textwidth      6.5in

{%\theoremheaderfont{\it} \theorembodyfont{\rmfamily}
\newtheorem{theorem}{Theorem}
\newtheorem{lemma}[theorem]{Lemma}
\newtheorem{definition}[theorem]{Definition}

\newtheorem{corollary}[theorem]{Corollary}

}

%\renewcommand{\baselinestretch}{1.0}
%\renewcommand{\baselinestretch}{1.5}

%%%%%%%%%%%%%%%%%%%%%%%%%%%%%%%%%%%%%%%%%%%%%%%%%%%%%%%%%%End Soummya Kar Frontmatter%%%%%%%%%%%%%%%%%%%%%%%%%%%%%%%%%%%%%%%%%%%%%%%%%%%%%%%%%%%%%%%%%%%%
%\renewcommand{\baselinestretch}{1.5}
%\def\thesection{\arabic{section}}
%\def\thesubsection{\arabic{section}.\arabic{subsection}}
%\renewcommand*\thesection{\arabic{section}}
%\renewcommand*\thesubsection{\arabic{section}.\arabic{subsection}}

%\newtheorem{remark}{Remark}[section]

%\makeindex
%\title{Epidemics in Stochastic Multipartite Networks: Qualitative Behavior and Emergent Dynamics}

\title{\LARGE\bf Preorder Construct on Simple Undirected Graphs}

\author{Augusto Almeida Santos$^{\star}$, Jos\'e M.~F.~Moura$^{\star}$, and Jo\~{a}o Xavier$^\dagger$

\thanks{This work was partially supported by NSF grant $\#$ CCF-$1513936$, Fundac\~ao para a Ci\^encia e a Tecnologia grant [UID/EEA/50009/2013] and part of the work was developed under the grant SFRH/BD/33516/2008 by the Funda\c{c}\~{a}o para a Ci\^{e}ncia e a Tecnologia through the Carnegie Mellon Portugal program.}
\thanks{$^\star$ A. A. Santos (augusto.pt@gmail.com) and J.~M.~F.~Moura (moura@ece.cmu.edu) are with the Dep.~of Electrical and Computer Engineering,
Carnegie Mellon University, Pittsburgh, PA 15213, USA.}
\thanks{$\dagger$ J. Xavier is with the Institute for Systems and Robotics (ISR/IST), LARSyS, Instituto Superior T\'ecnico, Universidade de Lisboa, Av.~Rovisco Pais, Lisboa, Portugal (jxavier@isr.ist.utl.pt).}
}

\hyphenation{super-neigh-bor-hood}

\begin{document}

\maketitle
\thispagestyle{empty}
\pagestyle{plain}

\begin{abstract}
We construct a novel preorder on the set of nodes of a simple undirected graph. We prove that the preorder (induced by the topology of the graph) is preserved, e.g., by the logistic dynamical system (both in discrete and continuous time). Moreover, the underlying equivalence relation of the preorder corresponds to the coarsest equitable partition (CEP). This will further imply that the logistic dynamical system on a graph preserves its coarsest equitable partition. The results provide a nontrivial invariant set for the logistic and the like dynamical systems, as we show. We note that our construct provides a functional characterization for the CEP as an alternative to the pure set theoretical iterated degree sequences characterization~\cite{fractional}. The construct and results presented might have independent interest for analysis on graphs or qualitative analysis of dynamical systems over networks.
\end{abstract}

\textbf{Keywords.} Preorder; Symmetry; Graphs; Dynamical Systems.

\section{Introduction}

Graphs have become one of the central mathematical abstractions for representing interacting systems across science and engineering~\cite{barabasi2016network}. In such representations, nodes encode entities or components of a complex system while edges capture interactions, direct dependencies, or communication pathways between them. This framework naturally arises in a broad range of domains, including social networks~\cite{Javidi}, biological systems~\cite{Alon,PAMI_Partial}, epidemiology~\cite{Vesp_Complex}, and neuroscience~\cite{Bassett_Network_Neuroscience}.

In neuroscience in particular, graphs provide a natural language to describe the organization of the brain across multiple spatial and temporal scales. Brain regions may be modeled as nodes while structural or functional interactions between regions define the underlying connectivity network. This perspective has fostered important advances in the study of synchronization, information propagation, cognition, and neurological disorders~\cite{Seizure}. Similar graph-based viewpoints also emerge in epidemics, where nodes represent individuals or communities and edges encode possible transmission pathways, leading to dynamical systems evolving over networks.

Beyond the analysis of a known graph, a major challenge in modern network science concerns the recovery of latent graph structure from observations of dynamics or data. Problems such as network inference, graphical model estimation, or causal discovery aim at uncovering hidden interactions from partial and noisy measurements~\cite{MaterassiSalapakaCDC2015,tomo_isit,SMachado,PAMI_Partial}. In many situations, however, the exact graph structure is either inaccessible or only partially identifiable, motivating the search for structural properties or descriptors and symmetries that remain observable at a more macroscopic level.

One particularly important notion in this context is that of graph symmetry. Classical graph automorphisms capture exact structural invariances and induce equivalence classes of nodes with indistinguishable topological roles. Such symmetries are deeply connected to the behavior of dynamical systems over networks, often leading to synchronization phenomena and dimensionality reduction. However, automorphism-based symmetries may be overly restrictive, failing to capture more subtle forms of structural equivalence that nonetheless produce identical dynamical behavior.

A broader and highly relevant notion is given by equitable partitions, and in particular by the coarsest equitable partition (CEP), which groups together nodes having equivalent neighborhood statistics recursively across the graph. Equitable partitions play an important role in graph reduction, spectral graph theory, synchronization, and network dynamics. Unlike automorphism orbits, equitable partitions can reveal hidden structural regularities even in graphs with trivial automorphism groups.

In this work, we construct a novel preorder on the set of nodes of a graph whose associated equivalence relation coincides with the coarsest equitable partition. The preorder is intrinsically topological and admits an inductive characterization based on injective mappings between neighborhoods and paths of the graph. Moreover, we show that this preorder is preserved by logistic-type dynamical systems evolving over the network. As a consequence, the associated equivalence classes define nontrivial invariant structures for the dynamics.

Our construction provides an alternative functional characterization of the coarsest equitable partition beyond the classical iterated degree-sequence perspective. More broadly, the results establish a connection between graph topology, symmetry, and qualitative properties of nonlinear dynamical systems on networks, potentially bearing independent interest for graph theory, network dynamics, and systems analysis.

\section{Problem Formulation}

% Vocabulary: unallied to the; it retains some of the aspects (or endowments); fosterage has so refined the groundwork of; wavering in my resolutions

Let $G=\left(V, A\right)$ be an undirected graph where~$V$ is the set of~$N< \infty$ nodes and~$A\in \left\{0,1\right\}^{N\times N}$ is the underlying adjacency matrix. Our goal in this document is to construct a preorder on the (finite) set of nodes~$V$ of~$G$ that conveys non-trivial structure: 1) it is preserved by the logistic dynamical system
\begin{equation}
\frac{d}{dt}y_{i}(t) = \left(\sum_{j\sim i}\gamma y_{j}(t)\right)\left(1-y_i(t)\right)-y_i(t)\label{eq:logistic}
\end{equation}
for $i=1,\ldots,N$, in a sense to be made precise; and 2) its underlying equivalence relation coincides with the coarsest equitable partition of a graph. The logistic dynamics consists of a standard set of coupled Ordinary Differential Equations (ODEs) that model the spread of epidemics in networks (e.g.,~\cite{porterdynamical,newman,augusto_qualitative,kurtzi,Melanie}). The parameter~$\gamma$ stands for the rate of infection of a virus in the network; the state variable~$y_i(t)$ models the likelihood of infection of an individual~$i$ at time~$t$, due to transmission from its peers. The state-variable~$y_i(t)$ can be also interpreted as the fraction of individuals infected in a community (or sub-graph click\footnote{In a click, all nodes are connected to all other nodes.})~$i$ in a network of interconnected communities (a.k.a., super-network). The sum in equation~\eqref{eq:logistic} runs over the neighbors of~$i$ and that is what accounts for the underlying network structure in the dynamics.

%\begin{enumerate}
%  \item Construct a pre-order for undirected graphs;
%  \item extend it to directed (simple) graphs;
%  \item extend it to weighted directed graphs with natural weights $\Gamma\in \mathbb{N}_{+}^{N\times N}$;
%  \item extend it to weighted directed graphs with rational weights $\Gamma\in \mathbb{Q}_{+}^{N\times N}$;
%  \item extend it to general weighted directed graphs (with possibly irrational nonnegative weights).
%\end{enumerate}

A preorder on the set of nodes is a binary relation $\succeq$ on $V$ that is reflexive and transitive, i.e.,

$\bullet$ $i \succeq i$ (reflexive)

$\bullet$ $i \succeq i' \succeq i'' \Rightarrow i\succeq i''$ (transitive).

Any preorder on a set $V$ naturally induces a quotient on $V$ given by the following equivalence relation
\begin{equation}
i\cong j \overset{{\sf def.}}\Longleftrightarrow i\succeq j \mbox{ and } j\succeq i.\label{eq:equivale}
\end{equation}
Since `$\succeq$' is not a partial order, the right hand side of~\eqref{eq:equivale} does not imply equality~$i=j$ (up to isomorphism), but a weaker equivalence relation. In fact, we will show that our preorder construct induces (in light of equation~\eqref{eq:equivale}) the so-called coarsest equitable partition (CEP), which is an important coloring or symmetry on a graph and that we will introduce momentarily.

It is clear that the logistic dynamical system preserves the automorphisms of a graph -- in that, if isomorphic nodes are initialized with the same initial condition, then their state evolve in synchrony -- but it is non-trivial to show that it preserves the CEP of the graph (this will follow as a corollary to the fact that it preserves the preorder~`$\succeq$' constructed in this paper).

\textbf{Outline of the paper.} Section~\ref{sec:symmetry} introduces and contrast the definitions of isomorphism and the coarsest equitable partition on graphs; Section~\ref{sec:construct} constructs the preorder; Section~\ref{sec:preserva} shows the implications of the preorder on dynamical systems such as the logistic system, in particular, the logistic dynamical system preserves the preorder and hence admits a non-trivial invariant set; Section~\ref{sec:conclusion} concludes the paper.

\vspace*{2ex}

\textbf{Preliminary Notation.}

\vspace*{2ex}

\begin{itemize}
  \item $\mathbb{N}=\left\{1,2,3,\ldots\right\}$;
  \item $V(G)=$ set of nodes of graph~$G$ -- sometimes denoted simply by~$V$;
  \item $\mathcal{N}(i)=$ set of neighbors to the node $i$ (also named neighborhood to $i$) in the undirected graph $G=\left(V,E\right)$;
  \item $d(i)=$ degree of the node $i$ in the undirected graph $G=\left(V,E\right)$;
  \item $\mathbf{d}(\mathbf{p})=$ vector collecting the degrees of the nodes stacked in the vector $\mathbf{p}\in V^{\ell}$, assuming that $G=\left(V,E\right)$ is undirected;
  \item $\pi_{I}^{\ell}\,:\,V^{\ell+1}\rightarrow V^{\left|I\right|}:$ is the canonical projection on the coordinates indexed by the set~$I\subset \left\{1,\ldots,\ell\right\}$.
\end{itemize}

\section{Symmetries on Graphs}\label{sec:symmetry}

In this Section, we define some notions of symmetry on graphs and present the exact notion that fits our purposes, namely, the coarsest equitable partition (CEP). We start by introducing the definition of isomorphism on graphs.

%\begin{definition}[Graph]\label{def:graph}
%The ordered pair $G=\left(V,E\right)$, is a graph; where $V \subset \mathbb{N}$ and $E\subset V\times V$ are the finite set of nodes and edges, respectively.
%\end{definition}
%
%We define the concept of neighborhood in a graph.
%
%\begin{definition}[Neighborhood]\label{def:neighborhood}
%Let $G=\left(V,\gamma\right)$ be an e-network, and $i\in V$ be a node in~$G$. Define $\mathcal{N}(i,G)$ as the $1$-hop neighborhood of node $i$ in $G$ and write it as
%\begin{equation}
%\mathcal{N}(i,G):=\left\{j\in V\,:\, j \rightarrow_G i\right\}
%\end{equation}
%where $j\rightarrow_G i$ means $\gamma(j,i)>0$, i.e., node $j$ can directly infect node $i$. Write $\mathcal{N}^{2}(i,G)$ as the $2$nd order neighborhood of~$i$, that is,
%\begin{equation}
%j\in \mathcal{N}^{2}(i,G)\nonumber
%\end{equation}
%if and only if the shortest path connecting~$j$ to~$i$ (a.k.a.~geodesic) has length $2$ hops. Inductively, say
%\begin{equation}
%j\in\mathcal{N}^{n}(i,G)\nonumber
%\end{equation}
%if and only if there exists
%\begin{equation}
%k\in\mathcal{N}^{n-1}(i,G)\nonumber
%\end{equation}
%with $j\rightarrow_G k$, i.e., the geodesic connecting $j$ to $i$ has a length of~$n$ hops. If there is no ambiguity, we will suppress $G$ and we will write $\mathcal{N}^n(i)$ and $i\rightarrow j$, instead.
%\end{definition}

\begin{definition}[Graph Isomorphism]\label{def:isomorphgraph}
We say that two graphs~$\mathcal{G}_1=\left(V_1,E_1\right)$ and~$\mathcal{G}_2=\left(V_2,E_2\right)$ are isomorphic, and represent it by~$\mathcal{G}_1\cong \mathcal{G}_2$, whenever there exists a bijection~$f\,:\,V_1\rightarrow V_2$ with
\begin{equation}
i\sim j \Leftrightarrow f(i)\sim f(j)\nonumber
\end{equation}
where~`$i \sim j$' means that~$i$ and~$j$ are neighbors in the graph.
\end{definition}

\begin{definition}[Graph Automorphism]\label{def:automorphgraph}
We call~$f\,:V\,\rightarrow V$ an automorphism on~$G=\left(V,E\right)$ if~$f$ is an isomorphism from~$G$ onto~$G$, i.e.,~$f$ is bijective and
\begin{equation}
i\sim j \Leftrightarrow f(i)\sim f(j).\nonumber
\end{equation}
\end{definition}
We say that two nodes~$i$ and~$j$ in a graph are isomorphic, and denote it as~$i\approx j$, whenever there exists an automorphism~$f$ with~$f(i)=j$ or~$f(j)=i$. The set of automorphisms on a graph~$G$ endowed with the operation of composition conforms to a group that we refer to as~${\sf Aut}(G)$.

\begin{definition}[Orbits]\label{def:orbit}
Let~$i$ be a node in~$G$. We refer to
\begin{equation}
[i]\overset{\Delta}={\sf Aut}(G).i=\left\{f(i)\,:\,f\in {\sf Aut}(G)\right\}\nonumber
\end{equation}
as the orbit of the vertex~$i$. In words, two nodes are in the same orbit if and only if they are isomorphic.
\end{definition}

The orbits of a graph in definition~\ref{def:orbit} define an equivalence relation between nodes which allows to quotient the set of nodes into equivalence classes
\begin{equation}
\left[i\right]\overset{\Delta}=\left\{j\in V\,:\, j\approx i\right\}.\nonumber
\end{equation}
Fig.~\ref{fig:quotient} illustrates the chromatic partition of a graph according to this equivalence relation. Monochromatic graphs under this equivalence relation are referred to as vertex-transitive graphs.
\begin{figure} [hbt]
\begin{center}
\includegraphics[scale= 0.5]{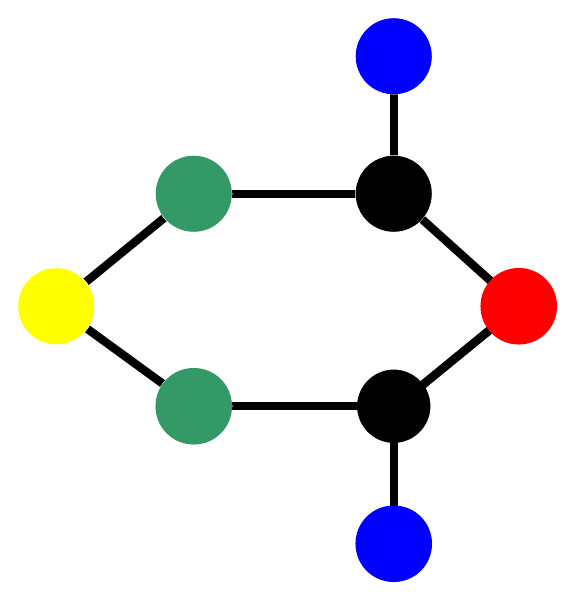}
\caption{Nodes with the same color are isomorphic nodes.}\label{fig:quotient}
\end{center}
\end{figure}
It is natural to expect that the partition illustrated in Fig.~\ref{fig:quotient} is preserved by the logistic dynamical system in the sense that if all nodes have the same initial condition, i.e.,~$\mathbf{y}(0)=\mathbf{1}_N y_0$, then, there is no reason to expect that eventually, say one of the indigo nodes will increase its state (e.g., degree of infection in an epidemics) faster than the other indigo node. Indeed, and more formally, we can show that such dynamical system preserves the partition induced by the orbits of the automorphism group of a graph. In particular, the underlying dynamical system admits a lower dimensional version quotiented by the automorphism symmetries of the graph.

Fig.~\ref{fig:frucht} illustrates a regular graph with degree~$3$, so-called Frucht graph, that entails some asymmetry. For instance, its group of automorphisms is the trivial one (for more details, refer to~\cite{implementing}), i.e., no two nodes are isomorphic in the Frucht graph. In other words, the Frucht graph is a regular graph that is not vertex-transitive. From such asymmetry, it is not clear whether nodes initialized at the same state will evolve evenly under an epidemics modeled by the logistic dynamics.
\begin{figure} [hbt]
\begin{center}
\includegraphics[scale= 0.4]{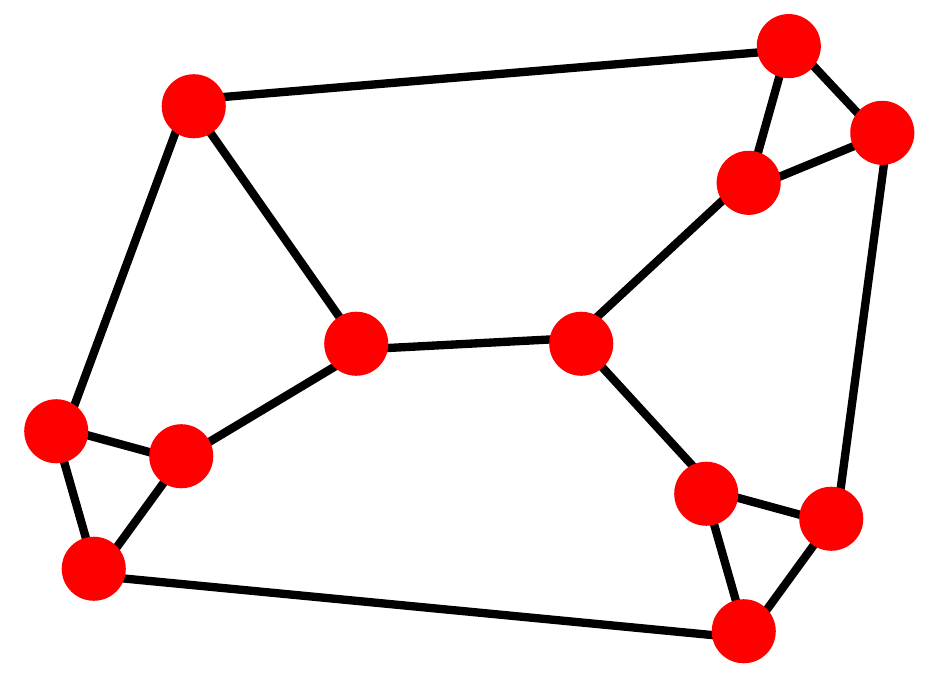}
\caption{Frucht Graph.}\label{fig:frucht}
\end{center}
\end{figure}
Despite the asymmetry, all nodes are equivalent to each other in the CEP (or also in our preorder construct) sense. Therefore, as corollary to the results proved in this paper, nodes departing from the same state, in any regular graph (not necessarily vertex-transitive), will evolve in synchrony -- in particular, a regular network admits a $1$-dimensional dynamics version.

\textbf{Remark.} Note that the synchrony induced by the CEP on the logistic dynamics shall not be taken for granted as one can find counter-examples of dynamical systems whose qualitative properties depend on the nodes even for a regular graph (that is not vertex-transitive), refer, e.g., to~\cite{aldous1989}.

The previous discussion illustrates that even though the orbits of an automorphism are preserved by the logistic and the like dynamical systems, this is not the coarsest coloring that is preserved. Next, we introduce the coarsest equitable partition (for more details refer to~\cite{fractional}). First, let~$\mathcal{P}=\left\{\mathcal{P}_1,\ldots,\mathcal{P}_k\right\}$ be a partition on~$V$. Define
\begin{equation}
s_{\ell}(i) = \left|\mathcal{N}(i)\cap \mathcal{P}_{\ell}\right|\nonumber
\end{equation}
as the number of neighbors of~$i$ in the partition~$\ell$, and define~
\begin{equation}
\mathbf{s}(i)=\left(s_1(i),\ldots,s_{d(i)}(i)\right),\nonumber
\end{equation}
as the distribution of the~$d(i)$ neighbors of~$i$ across the~$\ell$ elements of the partition.

\begin{definition}[Equitable partition]
The partition~$\mathcal{P}$ is called equitable whenever
\begin{equation}
i\approx_{\mathcal{P}} j \Leftrightarrow \mathbf{s}(i)=\mathbf{s}(j).\nonumber
\end{equation}
\end{definition}
In words, two nodes~$i$ and $j$ lie in the same class if and only if they have the same distribution of neighbors across classes. In particular, they have the same degree. The partition induced by the orbits of the group of automorphisms in a graph is an equitable partition, but not the coarsest one in the graph. As referred to before, no two nodes in the Frucht graph in Fig.~\ref{fig:frucht}, which is a regular graph, are equivalent w.r.t. the group of automorphism whereas all nodes are equivalent w.r.t. our preorder construct equivalence, which is equitable and corresponds to the coarsest equitable partition. In certain cases, e.g., when the underlying graph~$G$ is a tree, then the orbits of the automorphism group and the coarsest equitable partition coincide. The next lemma asserts that the \textbf{coarsest} equitable partition exists in any (finite) graph.
\begin{lemma}
Let~$\mathcal{P}$ and~$\mathcal{Q}$ be two equitable partitions on~$G$. Then, there exists the finest equitable partition that is coarser than~$\mathcal{P}$ and~$\mathcal{Q}$ and it is denoted as
\begin{equation}
\mathcal{P}\vee\mathcal{Q}\nonumber
\end{equation}
\end{lemma}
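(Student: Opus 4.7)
The plan is to define $\mathcal{P}\vee\mathcal{Q}$ as the \emph{partition-lattice} join: the partition of $V$ whose equivalence relation is the transitive closure of $\approx_{\mathcal{P}}\cup\approx_{\mathcal{Q}}$. Since the lattice of partitions of a finite set is complete, this object exists unconditionally and is, by construction, the finest partition of $V$ that is coarser than both $\mathcal{P}$ and $\mathcal{Q}$. Once I show it is equitable, it is automatically also the finest \emph{equitable} partition coarser than both: any equitable competitor is in particular a partition coarser than both $\mathcal{P}$ and $\mathcal{Q}$, hence coarser than the lattice join. So the content of the lemma reduces to checking that this lattice join preserves equitability.

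The crux is a simple coarsening observation: if $\mathcal{P}$ is equitable and $\mathcal{R}$ is any partition coarser than $\mathcal{P}$, then $\mathcal{P}$-equivalent nodes have the same distribution of neighbors across the blocks of $\mathcal{R}$. This follows by writing each $\mathcal{R}$-block $R_\ell$ as a disjoint union $\bigcup_{k\in I_\ell}P_k$ of $\mathcal{P}$-blocks and computing $|\mathcal{N}(i)\cap R_\ell|=\sum_{k\in I_\ell}|\mathcal{N}(i)\cap P_k|$; each summand is invariant under $\approx_{\mathcal{P}}$ by equitability of $\mathcal{P}$, hence so is the sum. The identical statement holds with $\mathcal{Q}$ in place of $\mathcal{P}$, since $\mathcal{R}:=\mathcal{P}\vee\mathcal{Q}$ is coarser than $\mathcal{Q}$ as well.

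To conclude that $\mathcal{R}$ is equitable, I take any $i,j$ with $i\approx_{\mathcal{R}}j$. By the definition of the join there is a finite chain $i=v_0,v_1,\dots,v_k=j$ in which each consecutive pair $(v_m,v_{m+1})$ is related by $\approx_{\mathcal{P}}$ or by $\approx_{\mathcal{Q}}$. Applying the coarsening observation at each link gives $|\mathcal{N}(v_m)\cap R_\ell|=|\mathcal{N}(v_{m+1})\cap R_\ell|$ for every $\mathcal{R}$-block $R_\ell$, and telescoping the equalities along the chain produces $\mathbf{s}(i)=\mathbf{s}(j)$ with respect to $\mathcal{R}$. The only delicate point is that at every step of the chain the counting must be done with respect to the \emph{coarsened} partition $\mathcal{R}$ and not the finer $\mathcal{P}$ or $\mathcal{Q}$; this is exactly what the coarsening observation enables, so the remainder of the argument is pure transitivity.
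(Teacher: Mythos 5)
Your proposal is correct, and it is worth noting at the outset that the paper itself states this lemma without any proof, so there is nothing in the source to compare against line by line; your argument fills that gap. The route you take is the standard one (it is essentially the argument behind Theorem~6.5.1 in the Scheinerman--Ullman reference the paper cites): realize $\mathcal{P}\vee\mathcal{Q}$ as the join in the full partition lattice, i.e.\ the equivalence relation generated by $\approx_{\mathcal{P}}\cup\approx_{\mathcal{Q}}$, observe that minimality among \emph{equitable} coarsenings is automatic once this join is shown to be equitable, and reduce equitability of the join to the coarsening observation plus a chain/telescoping argument. All three steps are sound: the coarsening observation is exactly the right lemma (each block of the coarser partition is a disjoint union of blocks of the finer one, so the neighbor counts add), and the telescoping along a $\mathcal{P}/\mathcal{Q}$-alternating chain is pure transitivity, as you say.

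One caveat you should flag explicitly: the paper defines an equitable partition by the biconditional $i\approx_{\mathcal{P}}j\Leftrightarrow\mathbf{s}(i)=\mathbf{s}(j)$, whereas your proof establishes only the forward implication $i\approx_{\mathcal{R}}j\Rightarrow\mathbf{s}(i)=\mathbf{s}(j)$ for $\mathcal{R}=\mathcal{P}\vee\mathcal{Q}$. The forward implication is the standard notion of equitability and is clearly what the paper intends (under the literal biconditional, even the orbit partition claimed to be equitable in the paper could fail, e.g.\ when two non-isomorphic vertices happen to share a neighbor distribution), so your proof is complete for the intended statement; but if one insisted on the literal definition, the converse direction would not follow from your argument and the lattice join could in principle fail it. A one-line remark that you are using the standard (one-directional) definition would make the proof airtight against that ambiguity.
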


We now present the characterization of the coarsest equitable partition on a graph~$G$, given by the multisets of the so-called iterated degree sequence of the nodes. First, let
\begin{eqnarray}
d^{(1)}(i) & \overset{\Delta}= & \left\{d(v)\,:\,v\in\mathcal{N}(i)\right\}\nonumber\\
d^{(2)}(i) & \overset{\Delta}= & \left\{d^{(1)}(v)\,:\,v\in\mathcal{N}(i)\right\}\nonumber\\
& \vdots & \nonumber\\
d^{(k+1)}(i) & \overset{\Delta}= & \left\{d^{(k)}(v)\,:\,v\in\mathcal{N}(i)\right\}\nonumber
\end{eqnarray}
For instance, from Fig.\ref{fig:quotient} we have
\begin{eqnarray}
d^{(1)}(2) & = & \left\{2,3\right\}\nonumber\\
d^{(2)}(2) & = & \left\{\left\{2,2\right\},\,\left\{1,2,2\right\}\right\}\nonumber\\
\end{eqnarray}

One can extend it to the infinitary sequence, so-called ultimate degree sequence
\begin{equation}
D\left(i\right)\overset{\Delta}=\left(d(i),d^{(1)}(i),d^{(2)}(i),\ldots,d^{(k)}(i),\ldots\right)\nonumber
\end{equation}
Now,~$\mathcal{P}$ is the coarsest equitable partition in a graph~$G$ if and only if
\begin{equation}
i\approx_{\mathcal{P}} j \Leftrightarrow D(i)=D(j).\nonumber
\end{equation}
Observe in particular that all nodes in a regular graph fall in the same class with respect to the coarsest equitable partition.

We rephrase a corollary to Theorem~$6.5.1$ in~\cite{fractional} as follows.
\begin{corollary}
Let~$\mathcal{P}$ be the partition induced on the set of nodes~$V$ by the iterated degree sequence on the graph~$G=\left(V,E\right)$. Let~$\mathcal{Q}$ be an equitable partition on~$V$. Then,~$\mathcal{P}$ is coarser than~$\mathcal{Q}$.
\end{corollary}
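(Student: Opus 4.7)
The plan is to prove this by strong induction on the depth~$k$ of the iterated degree sequence, showing that equitableness of~$\mathcal{Q}$ forces~$d^{(k)}(i)=d^{(k)}(j)$ whenever~$i\approx_{\mathcal{Q}} j$. Once this holds for every~$k\ge 0$, the full ultimate degree sequences agree, $D(i)=D(j)$, hence $i\approx_{\mathcal{P}} j$. This is exactly the statement that each $\mathcal{Q}$-class is contained in a single $\mathcal{P}$-class, i.e., $\mathcal{P}$ is coarser than~$\mathcal{Q}$.

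For the base case $k=0$, I would note that if $i\approx_{\mathcal{Q}} j$ then $\mathbf{s}(i)=\mathbf{s}(j)$, and summing the coordinates gives $d(i)=d(j)$, which is $d^{(0)}(i)=d^{(0)}(j)$ in the notation above. The inductive step is the core of the argument. Assume that $v\approx_{\mathcal{Q}} w$ implies $d^{(k)}(v)=d^{(k)}(w)$, and take $i\approx_{\mathcal{Q}} j$. Because $\mathcal{Q}$ is equitable, for every class $\mathcal{Q}_\ell$ we have $|\mathcal{N}(i)\cap \mathcal{Q}_\ell|=|\mathcal{N}(j)\cap \mathcal{Q}_\ell|$, so one can build a bijection $\phi:\mathcal{N}(i)\rightarrow \mathcal{N}(j)$ that maps the neighbors of~$i$ in each class~$\mathcal{Q}_\ell$ onto the neighbors of~$j$ in the same class~$\mathcal{Q}_\ell$. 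In particular, $v\approx_{\mathcal{Q}} \phi(v)$ for every $v\in\mathcal{N}(i)$, and by the induction hypothesis $d^{(k)}(v)=d^{(k)}(\phi(v))$. Passing to multisets,
\begin{equation}
d^{(k+1)}(i)=\{d^{(k)}(v)\,:\,v\in\mathcal{N}(i)\}=\{d^{(k)}(\phi(v))\,:\,v\in\mathcal{N}(i)\}=\{d^{(k)}(w)\,:\,w\in\mathcal{N}(j)\}=d^{(k+1)}(j),\nonumber
\end{equation}
which closes the induction.

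The main obstacle is really the care needed in the bijection step: one has to use the \emph{class-by-class} matching provided by equitableness, not just equality of total degree, because the inductive hypothesis only controls $d^{(k)}$ up to $\mathcal{Q}$-equivalence of the underlying nodes. Beyond that, concluding $\mathcal{P}\succeq\mathcal{Q}$ (in the coarseness sense) is then a routine unpacking of definitions: from $d^{(k)}(i)=d^{(k)}(j)$ for all $k$ one obtains $D(i)=D(j)$, and by the characterization of~$\mathcal{P}$ via the ultimate degree sequence, $i$ and $j$ are in the same block of~$\mathcal{P}$. No appeal to finiteness of~$G$ is needed for the argument itself, though finiteness would typically be invoked to guarantee that the iteration stabilizes after finitely many refinements and thus that~$D(i)$ is, in practice, a finite object.
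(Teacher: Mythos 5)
Your proof is correct. Note that the paper does not actually prove this corollary: it is stated only as a rephrasing of Theorem~6.5.1 of~\cite{fractional}, so there is no in-paper argument to compare against; your induction on the depth~$k$ of the iterated degree sequence is the standard, self-contained proof of this fact. The two points that need care are both handled properly: the base case follows by summing the coordinates of~$\mathbf{s}(i)=\mathbf{s}(j)$, and the inductive step correctly uses the class-by-class counts~$s_\ell(i)=s_\ell(j)$ (rather than mere equality of total degree) to produce a $\mathcal{Q}$-class-preserving bijection~$\phi:\mathcal{N}(i)\rightarrow\mathcal{N}(j)$, which is exactly what the induction hypothesis requires since it controls~$d^{(k)}$ only up to $\mathcal{Q}$-equivalence of the underlying nodes. (One could phrase the same step without constructing~$\phi$ explicitly: by the hypothesis, $d^{(k)}$ is constant on each $\mathcal{Q}$-class, so the multiset~$d^{(k+1)}(i)$ is determined by the vector~$\mathbf{s}(i)$, which agrees with~$\mathbf{s}(j)$.) The final passage from~$d^{(k)}(i)=d^{(k)}(j)$ for all~$k$ to~$D(i)=D(j)$, hence to~$i\approx_{\mathcal{P}}j$, is indeed just the definition of~$\mathcal{P}$ being coarser than~$\mathcal{Q}$.
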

There are efficient algorithms to determine the coarsest equitable partition in~$O(N\log(N))$. Determining the finer partition induced by the orbits of the automorphism group is NP-hard. We reserve the symbol~$\approx$ as the equivalence relation associated to the coarsest equitable coloring.

%There are other ways to induce equitable partitions on graphs. For instance, for certain classes of graphs (so-called distance-regular), the distance-partition is an equipartition. The distance-partition is defined as follows: set a node in the network as the reference node. Then, partition the set of nodes according to the order of neighborhood w.r.t.~$i$ (for more details refer to http://www.math.uchicago.edu/~may/VIGRE/VIGRE2008/REUPapers/StaplesMoore.pdf),
%\begin{equation}
%\bigcup_{k=1}^{\hat{n}} \mathcal{N}^{k}(i).
%\end{equation}
%Thus, if the initial conditions respect the partitions, the system will evolve according to the partitions.
%

\section{Preorder-construct on simple undirected graphs}\label{sec:construct}

In this Section, we construct a preorder on the set of nodes of an undirected graph~$G=\left(V,E\right)$. The idea of the preorder is to formalize the following domination concept: a node~$i$ is greater or dominates a node~$j$ whenever the degree of~$i$ is greater than the degree of~$j$; and the degrees of the neighbors of~$i$ are greater than the degrees of the neighbors of~$j$; and the degrees of the neighbors of the neighbors of~$i$ are greater than the degrees of the neighbors of the neighbors of~$j$ and so forth. This idea must be clearly formalized to make any sense.

Define
\begin{equation}
\mathcal{P}_{\ell}(i)\overset{\Delta}= \left\{\mathbf{p}^{\ell}=\left(i,i_1,\ldots,i_{\ell}\right)\in V^{\ell+1}\,:\,i_{k}\sim i_{k+1}\,\,\forall{k\leq \ell-1}\right\}\nonumber,
\end{equation}
as the set of all paths, on the graph~$G$, departing from the node~$i$ and with length~$\ell$.

Now, we introduce one of the main concepts for the preorder construct.

\begin{definition}[$\ell$-adapted function]
Let~$i,j\in V(G)$ and~$\ell \in \mathbb{N}$. We define the class of $\ell$-adapted functions on~$G$ at the pair~$\left(j,i\right)$ as the set of functions
\begin{equation}
\begin{array}{rcl} f_{ji}^{\ell}: \mathcal{P}_{\ell}(j) & \to & \mathcal{P}_{\ell}(i)\\ \left(j,j_1,\ldots,j_{\ell}\right) & \mapsto & \left(i,i_1,\ldots,i_{\ell}\right) \end{array}\nonumber
\end{equation}
fulfilling the next two properties
\begin{eqnarray}
\pi^{\ell}_{\leq m}\left(\mathbf{p}^{\ell}\right) & = & \pi^{\ell}_{\leq m}\left(\mathbf{\widetilde{p}}^{\ell}\right)\nonumber\\
& \Downarrow & \\
\pi^{\ell}_{\leq m}\left(f^{\ell}_{ji}\left(\mathbf{p}^{\ell}\right)\right) & = & \pi^{\ell}_{\leq m}\left(f^{\ell}_{ji}\left(\mathbf{\widetilde{p}}^{\ell}\right)\right),\nonumber
\end{eqnarray}
and
\begin{eqnarray}
\pi^{\ell}_{\leq m}\left(\mathbf{p}^{\ell}\right) & \neq & \pi^{\ell}_{\leq m}\left(\mathbf{\widetilde{p}}^{\ell}\right)\nonumber\\
& \Downarrow & \label{eq:notequal}\\
\pi^{\ell}_{\leq m}\left(f^{\ell}_{ji}\left(\mathbf{p}^{\ell}\right)\right) & \neq & \pi^{\ell}_{\leq m}\left(f^{\ell}_{ji}\left(\mathbf{\widetilde{p}}^{\ell}\right)\right),\nonumber
\end{eqnarray}
for all~$\mathbf{p}^{\ell},\mathbf{\widetilde{p}}^{\ell}\in \mathcal{P}_{\ell}(j)$, where we defined
\begin{equation}
\pi^{\ell}_{\leq m}\,:\,V^{\ell+1}\longrightarrow V^{m+1}\nonumber
\end{equation}
with~$\pi^{\ell}_{\leq m}\left(x_0,x_1,\ldots,x_{\ell}\right)= \left(x_0,x_1,\ldots,x_{m}\right)$, i.e.,~$\pi_{\leq m}$ returns the first~$m\leq \ell$ coordinates from the input vector.
\end{definition}
In words, if the first~$m$ coordinates of a path~$\mathbf{p}^{\ell}$ coincide with the first~$m$ coordinates of a path~$\mathbf{\widetilde{p}}^{\ell}$, then the same should hold for the images of~$\mathbf{p}^{\ell}$ and~$\mathbf{\widetilde{p}}^{\ell}$ by the map~$f^{\ell}_{ij}$. For simplicity, from now one, we will commit an abuse of notation and we will write~$\pi_{\leq m}$ instead of~$\pi_{\leq m}^{\ell}$.

We assume throughout that~$f^{\ell}_{ii}$ is the identity map for any~$i\in V(G)$, which is clearly an adapted function. We now observe two important properties.

\begin{lemma}[``Group" property]
Let~$g^{\ell}_{jk}$ and~$h^{\ell}_{ki}$ be two $\ell$-adapted functions at~$\left(j,k\right)$ and~$\left(k,i\right)$, respectively. Then,
\begin{equation}
f^{\ell}_{ji}\overset{\Delta}=h^{\ell}_{ki}\circ g^{\ell}_{jk}\nonumber
\end{equation}
is $\ell$-adapted at~$\left(j,i\right)$.
\end{lemma}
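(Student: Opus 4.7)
The plan is to verify the two defining properties of an $\ell$-adapted function for the composition $f^{\ell}_{ji} = h^{\ell}_{ki}\circ g^{\ell}_{jk}$, by chaining each property successively through $g^{\ell}_{jk}$ and then through $h^{\ell}_{ki}$. The argument is a direct diagram chase, and the only thing to be careful about is that both the ``equal implies equal'' and the ``unequal implies unequal'' properties are being used in the same direction for each of the two factors.

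First I would handle the easy direction. Take any $\mathbf{p}^{\ell},\widetilde{\mathbf{p}}^{\ell}\in\mathcal{P}_{\ell}(j)$ with $\pi_{\leq m}(\mathbf{p}^{\ell})=\pi_{\leq m}(\widetilde{\mathbf{p}}^{\ell})$. Since $g^{\ell}_{jk}$ is $\ell$-adapted at $(j,k)$, the first adaptedness property yields $\pi_{\leq m}(g^{\ell}_{jk}(\mathbf{p}^{\ell}))=\pi_{\leq m}(g^{\ell}_{jk}(\widetilde{\mathbf{p}}^{\ell}))$. Applying the analogous property of $h^{\ell}_{ki}$ at $(k,i)$ to this pair of paths in $\mathcal{P}_{\ell}(k)$ gives
\begin{equation}
\pi_{\leq m}\bigl(h^{\ell}_{ki}(g^{\ell}_{jk}(\mathbf{p}^{\ell}))\bigr)=\pi_{\leq m}\bigl(h^{\ell}_{ki}(g^{\ell}_{jk}(\widetilde{\mathbf{p}}^{\ell}))\bigr),\nonumber
\end{equation}
which is exactly the first property for $f^{\ell}_{ji}$.

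Next I would handle the second property. Suppose $\pi_{\leq m}(\mathbf{p}^{\ell})\neq \pi_{\leq m}(\widetilde{\mathbf{p}}^{\ell})$. The second adaptedness property of $g^{\ell}_{jk}$ (equation~\eqref{eq:notequal}) gives $\pi_{\leq m}(g^{\ell}_{jk}(\mathbf{p}^{\ell}))\neq \pi_{\leq m}(g^{\ell}_{jk}(\widetilde{\mathbf{p}}^{\ell}))$; feeding this into the same property of $h^{\ell}_{ki}$ yields $\pi_{\leq m}(f^{\ell}_{ji}(\mathbf{p}^{\ell}))\neq \pi_{\leq m}(f^{\ell}_{ji}(\widetilde{\mathbf{p}}^{\ell}))$. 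Note the implicit type-check: the output of $g^{\ell}_{jk}$ lies in $\mathcal{P}_{\ell}(k)$, which is precisely the domain of $h^{\ell}_{ki}$, so the composition is well defined and the domains/codomains match $(j,i)$.

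There is no real obstacle here; the statement is essentially that the class of $\ell$-adapted maps is closed under composition, and this reduces to applying each of the two implications once per factor in sequence. The only small subtlety worth mentioning explicitly in the write-up is that the second property being contrapositive-friendly is used in the forward direction on both factors, so the chaining works uniformly and no case split on $m$ or on the paths is required.
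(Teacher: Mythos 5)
Your proof is correct and follows essentially the same route as the paper's: chain the ``equal implies equal'' property through $g^{\ell}_{jk}$ and then $h^{\ell}_{ki}$, and do the same with the ``unequal implies unequal'' property of equation~\eqref{eq:notequal} (which the paper merely dismisses with ``similarly''). No issues.
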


\begin{proof}
Indeed, let~$\mathbf{p}^{\ell}, \mathbf{\widetilde{p}}^{\ell}\in \mathcal{P}_{\ell}(j)$ with
\begin{equation}
\pi_{\leq m}\left(\mathbf{p}^{\ell}\right)= \pi_{\leq m}\left(\mathbf{\widetilde{p}}^{\ell}\right)\nonumber
\end{equation}
for some~$m\leq \ell$, then
\begin{equation}
\pi_{\leq m}\left(g^{\ell}_{jk}\left(\mathbf{p}^{\ell}\right)\right)= \pi_{\leq m}\left(g^{\ell}_{jk}\left(\mathbf{\widetilde{p}}^{\ell}\right)\right)\nonumber
\end{equation}
as~$g^{\ell}_{jk}$ is $\ell$-adapted and thus,
\begin{equation}
\pi_{\leq m}\left(f^{\ell}_{jk}\left(\mathbf{p}^{\ell}\right)\right)= \pi_{\leq m}\left(f^{\ell}_{jk}\left(\mathbf{\widetilde{p}}^{\ell}\right)\right)\nonumber
\end{equation}
as~$h^{\ell}_{jk}$ is $\ell$-adapted. Similarly, we can establish the property in equation~\eqref{eq:notequal}.
\end{proof}

\begin{lemma}\label{lem:invertible}
If~$f^{\ell}_{ji}\,:\,\mathcal{P}_{\ell}(j)\rightarrow \mathcal{P}_{\ell}(i)$ is an $\ell$-adapted invertible function, then~$g^{\ell}_{ij}\overset{\Delta}=\left(f^{\ell}_{ji}\right)^{-1}$ is an $\ell$-adapted function.
\end{lemma}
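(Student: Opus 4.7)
The plan is to observe that the two defining conditions of an $\ell$-adapted function together say something symmetric, namely that $f^{\ell}_{ji}$ both \emph{preserves} and \emph{reflects} coincidence of the first $m$ coordinates for every $m\leq \ell$. That is, the pair of implications in the definition is logically equivalent to the single biconditional
\begin{equation}
\pi_{\leq m}(\mathbf{p}^{\ell})=\pi_{\leq m}(\widetilde{\mathbf{p}}^{\ell})\;\Longleftrightarrow\; \pi_{\leq m}\bigl(f^{\ell}_{ji}(\mathbf{p}^{\ell})\bigr)=\pi_{\leq m}\bigl(f^{\ell}_{ji}(\widetilde{\mathbf{p}}^{\ell})\bigr).\nonumber
\end{equation}
Once this symmetry is made explicit, the lemma is essentially a statement about inverses of bijections: the ``$\Rightarrow$'' direction for $g^{\ell}_{ij}$ comes from ``$\Leftarrow$'' for $f^{\ell}_{ji}$, and vice versa.

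Concretely, I would proceed as follows. Fix $\mathbf{q}^{\ell},\widetilde{\mathbf{q}}^{\ell}\in \mathcal{P}_{\ell}(i)$ and use invertibility to write $\mathbf{q}^{\ell}=f^{\ell}_{ji}(\mathbf{p}^{\ell})$ and $\widetilde{\mathbf{q}}^{\ell}=f^{\ell}_{ji}(\widetilde{\mathbf{p}}^{\ell})$ for uniquely determined $\mathbf{p}^{\ell},\widetilde{\mathbf{p}}^{\ell}\in\mathcal{P}_{\ell}(j)$, so that $\mathbf{p}^{\ell}=g^{\ell}_{ij}(\mathbf{q}^{\ell})$ and $\widetilde{\mathbf{p}}^{\ell}=g^{\ell}_{ij}(\widetilde{\mathbf{q}}^{\ell})$. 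To check the first adapted property for $g^{\ell}_{ij}$, assume $\pi_{\leq m}(\mathbf{q}^{\ell})=\pi_{\leq m}(\widetilde{\mathbf{q}}^{\ell})$; I will conclude $\pi_{\leq m}(\mathbf{p}^{\ell})=\pi_{\leq m}(\widetilde{\mathbf{p}}^{\ell})$ by contraposition of the second adapted property of $f^{\ell}_{ji}$ (inequality would propagate to the images, contradicting the hypothesis). To check the second adapted property for $g^{\ell}_{ij}$, assume $\pi_{\leq m}(\mathbf{q}^{\ell})\neq\pi_{\leq m}(\widetilde{\mathbf{q}}^{\ell})$ and conclude $\pi_{\leq m}(\mathbf{p}^{\ell})\neq\pi_{\leq m}(\widetilde{\mathbf{p}}^{\ell})$ by contraposition of the first adapted property of $f^{\ell}_{ji}$.

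There is no real obstacle here; the argument is pure diagram chasing using the bijection to swap the roles of domain and codomain. The only point worth emphasizing is why invertibility is actually used: without surjectivity we could not express an arbitrary pair in $\mathcal{P}_{\ell}(i)$ as images of something in $\mathcal{P}_{\ell}(j)$, and without injectivity the inverse $g^{\ell}_{ij}$ would not even be a well-defined function. Both facets are needed to translate statements about $g^{\ell}_{ij}$ into statements about $f^{\ell}_{ji}$, after which the contrapositive forms of the two adapted properties deliver the conclusion immediately.
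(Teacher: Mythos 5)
Your proposal is correct and follows essentially the same route as the paper: the paper argues by contradiction (assuming one of the two adapted properties fails for $g^{\ell}_{ij}$ and pushing the pair back through $f^{\ell}_{ji}$), which is just the contrapositive form of your direct argument. Your framing via the biconditional makes the symmetry slightly more explicit, but the logical content is identical.
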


\begin{proof}
Assume that~$g^{\ell}_{ij}$ is not an adapted function. Then, either one of the cases below should hold true
\begin{enumerate}
  \item $\pi_{\leq m}\left(\mathbf{p}^{\ell}\right)= \pi_{\leq m}\left(\mathbf{\widetilde{p}}^{\ell}\right)$, but $\pi_{\leq m}\left(g^{\ell}_{jk}\left(\mathbf{p}^{\ell}\right)\right)\neq \pi_{\leq m}\left(g^{\ell}_{jk}\left(\mathbf{\widetilde{p}}^{\ell}\right)\right)$ for some~$m$ and $\mathbf{p}^{\ell}, \mathbf{\widetilde{p}}^{\ell}\in\mathcal{P}_{\ell}(i)$.
  \item $\pi_{\leq m}\left(\mathbf{p}^{\ell}\right)\neq \pi_{\leq m}\left(\mathbf{\widetilde{p}}^{\ell}\right)$, but $\pi_{\leq m}\left(g^{\ell}_{jk}\left(\mathbf{p}^{\ell}\right)\right)= \pi_{\leq m}\left(g^{\ell}_{jk}\left(\mathbf{\widetilde{p}}^{\ell}\right)\right)$ for some~$m$ and $\mathbf{p}^{\ell}, \mathbf{\widetilde{p}}^{\ell}\in\mathcal{P}_{\ell}(i)$.
\end{enumerate}
Now, case 1) implies that
\begin{eqnarray}
\pi_{\leq m}\left(f^{\ell}_{ji}\left(g^{\ell}_{ij}\left(\mathbf{p}^{\ell}\right)\right)\right) & = & \pi_{\leq m}\left(\mathbf{p}^{\ell}\right)\nonumber\\
& = & \pi_{\leq m}\left(\mathbf{\widetilde{p}}^{\ell}\right)\nonumber\\
& = & \pi_{\leq m}\left(f^{\ell}_{ji}\left(g^{\ell}_{ij}\left(\mathbf{\widetilde{p}}^{\ell}\right)\right)\right)\nonumber
\end{eqnarray}
which is a contradiction as
\begin{equation}
\pi_{\leq m}\left(g^{\ell}_{ij}\left(\mathbf{p}^{\ell}\right)\right)\neq \pi_{\leq m}\left(g^{\ell}_{ij}\left(\mathbf{\widetilde{p}}^{\ell}\right)\right)\nonumber
\end{equation}
and~$f^{\ell}_{ji}$ is an adapted function. Case 2) similarly leads to a contradiction, therefore the inverse map~$g^{\ell}_{ij}$ is an $\ell$-adapted function.
\end{proof}

We now observe that if~$f^{\ell}_{ji}\,:\,\mathcal{P}_{\ell}(j)\rightarrow \mathcal{P}_{\ell}(i)$ is an adapted function, then its restriction to the $(\ell-1)$-paths~$\mathcal{P}_{\ell-1}(j)$, denoted as~$f^{\ell-1}_{ji}$, is an $(\ell-1)$-adapted function. Moreover,~$f^{\ell}_{ji}$ extends~$f^{\ell-1}_{ji}$ in that if~$\mathbf{p}^{\ell-1}=\pi_{\leq \ell-1}\left(\mathbf{p}^{\ell}\right)$, then
\begin{equation}
\pi_{\leq \ell-1}\left(f^{\ell}_{ji}(\mathbf{p}^{\ell})\right)=f^{\ell-1}_{ji}\left(\mathbf{p}^{\ell-1}\right)\label{eq:extends}
\end{equation}
or else, if~$\mathbf{p}^{\ell-1} \neq \pi_{\leq \ell-1}\left(\mathbf{p}^{\ell}\right)$, then
\begin{equation}
\pi_{\leq \ell-1}\left(f^{\ell}_{ji}(\mathbf{p}^{\ell})\right)\neq f^{\ell-1}_{ji}\left(\mathbf{p}^{\ell-1}\right)\label{eq:extends2}
\end{equation}

\begin{definition}[Adapted sequence]
We call
\begin{equation}
\left(f^{\ell}_{ji}\,:\,\mathcal{P}_{\ell}(j)\rightarrow \mathcal{P}_{\ell}(i)\right)_{\ell}\nonumber
\end{equation}
an adapted sequence (on~$\ell$) of adapted functions at the pair~$\left(j,i\right)$ if~$f^{\ell+1}_{ji}$ extends~$f^{\ell}_{ji}$ for any~$\ell \in \mathbb{N}$ in the sense of equations~\eqref{eq:extends}-\eqref{eq:extends2}.
\end{definition}

The following Lemma remarks that if the adapted sequence is comprised of invertible maps, then the inverse maps sequence is an adapted sequence as well.

\begin{lemma}[Inverse sequence]\label{lem:inversesequence}
Let~$\left(f^{\ell}_{ji}\right)_{\ell}$ be an adapted sequence with $f^k_{ji}$ being an invertible $k$-adapted function for all~$k$. Then, the sequence~$\left(g^{\ell}_{ij}\right)_{\ell}$, where~$g^{\ell}_{ij}\overset{\Delta}=\left(f^{\ell}_{ji}\right)^{-1}$, is an adapted sequence.
\end{lemma}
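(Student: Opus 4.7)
The plan is to split the claim into its two halves: first, that each $g^{\ell}_{ij}$ is individually an $\ell$-adapted function, and second, that the sequence $(g^{\ell}_{ij})_{\ell}$ enjoys the extension property of equations \eqref{eq:extends}-\eqref{eq:extends2}. The first half is essentially for free: it is exactly Lemma \ref{lem:invertible} applied to each invertible $f^{\ell}_{ji}$. So all of the work lies in the second.

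To handle the extension property, I would first rephrase the hypothesis on $(f^{\ell}_{ji})_\ell$ as the commutativity identity
\begin{equation}
\pi_{\leq \ell}\circ f^{\ell+1}_{ji} = f^{\ell}_{ji}\circ \pi_{\leq \ell}\nonumber
\end{equation}
together with a ``no spurious coincidences'' clause: whenever $\mathbf{p}^{\ell}\in\mathcal{P}_{\ell}(j)$ and $\mathbf{p}^{\ell+1}\in\mathcal{P}_{\ell+1}(j)$ satisfy $\mathbf{p}^{\ell} \neq \pi_{\leq \ell}(\mathbf{p}^{\ell+1})$, one has $f^{\ell}_{ji}(\mathbf{p}^{\ell}) \neq \pi_{\leq \ell}(f^{\ell+1}_{ji}(\mathbf{p}^{\ell+1}))$. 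These two clauses become the backbone of the rest of the argument, and each must be transferred from $f$ to $g$.

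For the commutativity identity for $g$ I would proceed by a direct chase. Pick $\mathbf{q}^{\ell+1}\in \mathcal{P}_{\ell+1}(i)$, set $\mathbf{p}^{\ell+1}= g^{\ell+1}_{ij}(\mathbf{q}^{\ell+1})$, and apply the commutativity identity for $f$ to $\mathbf{p}^{\ell+1}$ to obtain $\pi_{\leq \ell}(\mathbf{q}^{\ell+1}) = f^{\ell}_{ji}(\pi_{\leq \ell}(\mathbf{p}^{\ell+1}))$. Inverting $f^{\ell}_{ji}$ then yields $\pi_{\leq \ell}(g^{\ell+1}_{ij}(\mathbf{q}^{\ell+1})) = g^{\ell}_{ij}(\pi_{\leq \ell}(\mathbf{q}^{\ell+1}))$, which is precisely equation \eqref{eq:extends} for $g$. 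For the ``no spurious coincidences'' clause for $g$ I would argue by contradiction, in the same spirit as the proof of Lemma \ref{lem:invertible}: supposing $g^{\ell}_{ij}(\mathbf{q}^{\ell}) = \pi_{\leq \ell}(g^{\ell+1}_{ij}(\mathbf{q}^{\ell+1}))$ while $\mathbf{q}^{\ell}\neq \pi_{\leq \ell}(\mathbf{q}^{\ell+1})$, applying $f^{\ell}_{ji}$ to both sides and invoking the commutativity identity for $f$ collapses the assumption to $\mathbf{q}^{\ell} = \pi_{\leq \ell}(\mathbf{q}^{\ell+1})$, the desired contradiction.

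The main obstacle, if any, is purely symbolic bookkeeping: keeping the projection levels straight, ferrying back and forth between the $\mathbf{q}$-world in $\mathcal{P}_\ell(i)$ and the $\mathbf{p}$-world in $\mathcal{P}_\ell(j)$ via the substitution $\mathbf{p}=g(\mathbf{q})$, and making sure the $f$-side extension property is invoked at the correct level. No new graph-theoretic ingredient beyond the invertibility of each $f^{\ell}_{ji}$ is needed; the entire argument is a short functoriality-style diagram chase that closely mirrors the structure of Lemma \ref{lem:invertible}.
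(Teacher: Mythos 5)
Your proposal is correct and follows essentially the same route as the paper: invoke Lemma~\ref{lem:invertible} for pointwise adaptedness, then transfer the extension property \eqref{eq:extends}--\eqref{eq:extends2} from $f^{\ell}_{ji}$ to $g^{\ell}_{ij}$ by substituting $\mathbf{p}=g(\mathbf{q})$ and applying the $f$-side extension clauses. The only cosmetic difference is that you prove the first clause by a direct chase where the paper argues both clauses by contradiction; the logical content is identical.
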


\begin{proof}
Note that each~$g^{\ell}_{ij}$ is an adapted function from Lemma~\ref{lem:invertible}. We are just left to prove that~$g^{\ell+1}_{ij}$ extends~$g^{\ell}_{ij}$ for all~$\ell$, in the sense of equations~\eqref{eq:extends}-\eqref{eq:extends2}. Suppose that~$\left(g^{\ell}_{ij}\right)_{\ell}$ is not an adapted sequence. Then, there exists an~$\ell\in \mathbb{N}$ so that~$g^{\ell+1}_{ij}$ does not extend~$g^{\ell}_{ij}$, or in other words, one of the conditions below hold true
\begin{enumerate}
  \item $\mathbf{p}^{\ell}=\pi_{\leq \ell}\left(\mathbf{p}^{\ell+1}\right)$, but $\pi_{\leq \ell}\left(g^{\ell+1}_{ji}(\mathbf{p}^{\ell+1})\right) \neq g^{\ell}_{ji}\left(\mathbf{p}^{\ell}\right)$;
  \item $\mathbf{p}^{\ell}\neq \pi_{\leq \ell}\left(\mathbf{p}^{\ell+1}\right)$, but $\pi_{\leq \ell}\left(g^{\ell+1}_{ji}(\mathbf{p}^{\ell+1})\right) = g^{\ell}_{ji}\left(\mathbf{p}^{\ell}\right)$.
\end{enumerate}
Define~$\mathbf{\widetilde{p}^{\ell}}\overset{\Delta}=g_{ij}^{\ell}\left(\mathbf{p}^{\ell}\right)$ and~$\mathbf{\widetilde{p}}^{\ell+1}\overset{\Delta}=g_{ij}^{\ell+1}\left(\mathbf{p}^{\ell+1}\right)$. Case 1) implies that
\begin{equation}
f^{\ell}_{ji}\left(\mathbf{\widetilde{p}}^{\ell}\right)\neq \pi_{\leq \ell}\left(f^{\ell+1}_{ji}\left(\mathbf{\widetilde{p}}^{\ell+1}\right)\right),\nonumber
\end{equation}
but~$f^{\ell}_{ji}\left(\mathbf{\widetilde{p}}^{\ell}\right)=\mathbf{p}^{\ell}$ and~$f^{\ell+1}_{ji}\left(\mathbf{\widetilde{p}}^{\ell+1}\right)=\mathbf{p}^{\ell+1}$ which contradicts the assumption. Case 2) leads similarly to a contradiction. Therefore, the inverse sequence~$\left(g^{\ell}_{ij}\right)_{\ell}$ is an adapted sequence.
\end{proof}

\begin{theorem}[``Group" property for adapted sequences]\label{th:groupadaptedseq}
Let~$\left(h^{\ell}_{ki}\right)$ and~$\left(g^{\ell}_{jk}\right)$ be two adapted sequences. Then, the pointwise composition
\begin{equation}
\left(f^{\ell}_{ji}\overset{\Delta}= h^{\ell}_{ki} \circ g^{\ell}_{jk}\right)_{\ell}\nonumber
\end{equation}
is an adapted sequence.
\end{theorem}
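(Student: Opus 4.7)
The plan is to verify the two defining requirements of an adapted sequence for $\left(f^{\ell}_{ji}\right)_{\ell}$ in turn. First, that each $f^{\ell}_{ji}$ is itself $\ell$-adapted; second, that $f^{\ell+1}_{ji}$ extends $f^{\ell}_{ji}$ in the sense of equations~\eqref{eq:extends}--\eqref{eq:extends2}. The first requirement is immediate: it is exactly the ``Group" property lemma already established for single adapted functions, applied pointwise at each level $\ell$.

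The substantive content is the second requirement, and I would prove it by chaining the extension property through the two composed sequences. Fix $\ell \in \mathbb{N}$ and pick $\mathbf{p}^{\ell+1}\in \mathcal{P}_{\ell+1}(j)$ and $\mathbf{p}^{\ell}\in \mathcal{P}_{\ell}(j)$. Introduce the intermediate images
\begin{equation}
\mathbf{q}^{\ell+1} \overset{\Delta}= g^{\ell+1}_{jk}\left(\mathbf{p}^{\ell+1}\right), \qquad \mathbf{q}^{\ell} \overset{\Delta}= g^{\ell}_{jk}\left(\mathbf{p}^{\ell}\right),
\end{equation}
which live in $\mathcal{P}_{\ell+1}(k)$ and $\mathcal{P}_{\ell}(k)$, respectively. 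The strategy is to transport the hypothesis about $\mathbf{p}^{\ell}$ and $\pi_{\leq \ell}\left(\mathbf{p}^{\ell+1}\right)$ through $g$ using that $\left(g^{\ell}_{jk}\right)_\ell$ is an adapted sequence, and then transport the resulting relation through $h$ using that $\left(h^{\ell}_{ki}\right)_\ell$ is an adapted sequence.

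In the equality case, assume $\mathbf{p}^{\ell} = \pi_{\leq \ell}\left(\mathbf{p}^{\ell+1}\right)$. Then the extension property of $g$ gives $\mathbf{q}^{\ell} = \pi_{\leq \ell}\left(\mathbf{q}^{\ell+1}\right)$, after which the extension property of $h$ yields
\begin{equation}
\pi_{\leq \ell}\left(h^{\ell+1}_{ki}\left(\mathbf{q}^{\ell+1}\right)\right) = h^{\ell}_{ki}\left(\mathbf{q}^{\ell}\right),
\end{equation}
which is exactly $\pi_{\leq \ell}\left(f^{\ell+1}_{ji}\left(\mathbf{p}^{\ell+1}\right)\right) = f^{\ell}_{ji}\left(\mathbf{p}^{\ell}\right)$. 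In the inequality case $\mathbf{p}^{\ell} \neq \pi_{\leq \ell}\left(\mathbf{p}^{\ell+1}\right)$, the same two-step transport using the ``$\neq$" clause of the extension property (available because $g$ and $h$ are adapted sequences) gives $\pi_{\leq \ell}\left(f^{\ell+1}_{ji}\left(\mathbf{p}^{\ell+1}\right)\right) \neq f^{\ell}_{ji}\left(\mathbf{p}^{\ell}\right)$.

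I do not expect a real obstacle here; the proof is a clean composition argument once the intermediate objects $\mathbf{q}^{\ell}, \mathbf{q}^{\ell+1}$ are named. The only point requiring care is not to confuse the ``$\ell$-adapted" property (about pairs of paths at a fixed length $\ell$) with the ``adapted sequence" property (relating length $\ell$ and length $\ell+1$): the first is what the preliminary Group Lemma handles, while the second is precisely what must be transported through $g$ and then through $h$ as above.
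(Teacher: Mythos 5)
Your proof is correct and follows essentially the same route as the paper's: introduce the intermediate images under $g^{\ell}_{jk}$ and $g^{\ell+1}_{jk}$ (the paper calls them $\widetilde{\mathbf{p}}^{\ell}$, $\widetilde{\mathbf{p}}^{\ell+1}$), transport the equality/inequality through $g$ and then through $h$ via the extension property. Your explicit appeal to the earlier ``Group'' lemma for the pointwise $\ell$-adaptedness of each $f^{\ell}_{ji}$ is a detail the paper leaves implicit, but the substance is identical.
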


\begin{proof}
The sequence~$\left(f^{\ell}_{ji}\right)$ must fulfill the defining conditions in equations~\eqref{eq:extends}-\eqref{eq:extends2}. Let~$\mathbf{p}^{\ell}=\pi_{\leq \ell}\left(\mathbf{p}^{\ell+1}\right)$, with~$\mathbf{p}^{\ell}\in\mathcal{P}_{\ell}(j)$ and~$\mathbf{p}^{\ell+1}\in\mathcal{P}_{\ell+1}(j)$. Then,
\begin{equation}
\pi_{\leq \ell}\left(g^{\ell+1}_{jk}\left(\mathbf{p}^{\ell+1}\right)\right)=g^{\ell}_{jk}\left(\mathbf{p}^{\ell}\right)\label{eq:g}
\end{equation}
as~$\left(g^{\ell}_{jk}\right)_{\ell}$ is an adapted sequence. Moreover, define~$\mathbf{\widetilde{p}}^{\ell+1}\overset{\Delta}= g^{\ell+1}_{jk}\left(\mathbf{p}^{\ell+1}\right)$ and~$\mathbf{\widetilde{p}}^{\ell}\overset{\Delta}= g^{\ell}_{jk}\left(\mathbf{p}^{\ell}\right)$. Thus, equation~\eqref{eq:g} can be written as
\begin{equation}
\pi_{\leq \ell}\left(\mathbf{\widetilde{p}}^{\ell+1}\right)=\mathbf{\widetilde{p}}^{\ell}\nonumber
\end{equation}
which implies that
\begin{equation}
\pi_{\leq \ell}\left(h^{\ell+1}_{jk}\left(\mathbf{\widetilde{p}}^{\ell+1}\right)\right)=h^{\ell}_{jk}\left(\mathbf{\widetilde{p}}^{\ell}\right)\nonumber
\end{equation}
and hence,
\begin{equation}
\pi_{\leq \ell}\left(f^{\ell+1}_{ji}\left(\mathbf{p}^{\ell+1}\right)\right)=f^{\ell}_{ji}\left(\mathbf{p}^{\ell}\right).\nonumber
\end{equation}
The second case in equation~\eqref{eq:extends2} holds similarly and~$\left(f^{\ell}_{ji}\right)$ is an adapted sequence.
\end{proof}

Now, we introduce the main construct of this document.

\begin{definition} [Preorder]\label{def:preorder}
Let~$i,j\in V(G)$. We say that~$i\succeq j$, whenever there exists an adapted sequence
\begin{equation}
\left(f^{\ell}_{ji}\,:\,\mathcal{P}_{\ell}(j)\rightarrow \mathcal{P}_{\ell}(i)\right)_{\ell}\nonumber
\end{equation}
where both properties hold
\begin{itemize}
  \item $f^{\ell}_{ji}\,:\,\mathcal{P}_{\ell}(j)\rightarrow \mathcal{P}_{\ell}(i)$ is injective;
  \item $\mathbf{d}\left(f^{\ell}_{ji}\left(\mathbf{p}^{\ell}\right)\right)\geq \mathbf{d}\left(\mathbf{p}^{\ell}\right)$
\end{itemize}
for all~$\mathbf{p}^{\ell}\in\mathcal{P}_{\ell}(j)$ and all~$\ell \in\mathbb{N}$.
\end{definition}
We say that an adapted sequence~$\left(f^{\ell}_{ji}\right)$ \textbf{explains} the inequality~$i\succeq j$, whenever~$\left(f^{\ell}_{ji}\right)$ fulfills the two conditions in the definition~\ref{def:preorder}. We may also refer that $i\succeq j$ is \textbf{explained} by~$\left(f^{\ell}_{ji}\right)$ under these conditions.

\begin{theorem}
The binary relation~$\succeq$ in definition~\ref{def:preorder} is a preorder on the set of nodes~$V(G)$ of the graph~$G$, i.e., it is reflexive and transitive:
\begin{itemize}
  \item \textbf{(Reflexivity)}   $i\succeq i$;
  \item \textbf{(Transitivity)} $i \succeq k \succeq j \Rightarrow i \succeq j$.
\end{itemize}
\end{theorem}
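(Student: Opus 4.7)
The two properties are essentially immediate from what we have already built. For \textbf{reflexivity}, the plan is to take the constant sequence of identity maps $f^{\ell}_{ii}=\mathrm{id}_{\mathcal{P}_{\ell}(i)}$. The identity is trivially $\ell$-adapted (both defining implications reduce to the same statement on the two sides), and the sequence is adapted since the identity on $\mathcal{P}_{\ell+1}(i)$ restricts to the identity on $\mathcal{P}_{\ell}(i)$ in the sense of \eqref{eq:extends}--\eqref{eq:extends2}. Injectivity is obvious, and $\mathbf{d}(f^{\ell}_{ii}(\mathbf{p}^{\ell}))=\mathbf{d}(\mathbf{p}^{\ell})\geq \mathbf{d}(\mathbf{p}^{\ell})$ componentwise. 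Hence this sequence explains $i\succeq i$.

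For \textbf{transitivity}, I will suppose $i\succeq k$ is explained by an adapted sequence $(h^{\ell}_{ki})$ and $k\succeq j$ is explained by $(g^{\ell}_{jk})$, and propose
\begin{equation}
f^{\ell}_{ji} \;\overset{\Delta}{=}\; h^{\ell}_{ki}\circ g^{\ell}_{jk} \nonumber
\end{equation}
as the witness for $i\succeq j$. That $(f^{\ell}_{ji})_{\ell}$ is an adapted sequence of $\ell$-adapted functions is exactly the content of Theorem~\ref{th:groupadaptedseq} (together with the earlier ``group" property for individual adapted functions). The remaining two conditions of Definition~\ref{def:preorder} are then verified directly. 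Injectivity of $f^{\ell}_{ji}$ follows because the composition of injective maps is injective, using the hypothesized injectivity of $h^{\ell}_{ki}$ and $g^{\ell}_{jk}$ at each level $\ell$.

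The degree domination is the only computation worth writing out, and I will do it by chaining the two given coordinatewise inequalities. For any $\mathbf{p}^{\ell}\in\mathcal{P}_{\ell}(j)$, setting $\mathbf{q}^{\ell}\overset{\Delta}{=} g^{\ell}_{jk}(\mathbf{p}^{\ell})\in\mathcal{P}_{\ell}(k)$, the hypotheses give
\begin{equation}
\mathbf{d}\bigl(f^{\ell}_{ji}(\mathbf{p}^{\ell})\bigr) \;=\; \mathbf{d}\bigl(h^{\ell}_{ki}(\mathbf{q}^{\ell})\bigr) \;\geq\; \mathbf{d}(\mathbf{q}^{\ell}) \;=\; \mathbf{d}\bigl(g^{\ell}_{jk}(\mathbf{p}^{\ell})\bigr) \;\geq\; \mathbf{d}(\mathbf{p}^{\ell}), \nonumber
\end{equation}
so the composed inequality holds componentwise on every path and at every length $\ell$. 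Thus $(f^{\ell}_{ji})_{\ell}$ explains $i\succeq j$.

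There is no genuine obstacle here: once Theorem~\ref{th:groupadaptedseq} is in hand, transitivity is a one-line composition argument and reflexivity is the identity. The only mild point worth being careful about is that the two Definition~\ref{def:preorder} conditions---injectivity and the pointwise degree bound---must be checked uniformly in $\ell$, which the above does.
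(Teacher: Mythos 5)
Your proof is correct and follows essentially the same route as the paper: the identity sequence for reflexivity, and composition $f^{\ell}_{ji}=h^{\ell}_{ki}\circ g^{\ell}_{jk}$ justified by Theorem~\ref{th:groupadaptedseq}, injectivity of compositions, and the chained degree inequality for transitivity. Your degree computation is in fact written slightly more carefully than the paper's (which contains a small index typo in the middle term of the chain).
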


\begin{proof}
\underline{\textbf{Reflexivity:}} For any node~$i\in V(G)$, consider the sequence of identity maps
\begin{equation}
f^{\ell}_{ii}\,:\,\mathcal{P}_{\ell}(i) \rightarrow \mathcal{P}_{\ell}(i)\nonumber
\end{equation}
and note that this is an adapted sequence, with injective maps that preserve the degree.

\underline{\textbf{Transitivity:}} Let~$\left(h^{\ell}_{ki}\right)$ and~$\left(g^{\ell}_{jk}\right)$ explain~$i\succeq k$ and~$k\succeq j$, respectively, and define the sequence
\begin{equation}
\left(f^{\ell}_{ji}\overset{\Delta}= h^{\ell}_{ki} \circ g^{\ell}_{jk}\right)_{\ell}.\nonumber
\end{equation}
First, note that from Theorem~\ref{th:groupadaptedseq}, the sequence~$\left(f^{\ell}_{ji}\right)$ is an adapted sequence. Since the composition of injective maps is injective, then each~$f^{\ell}_{ji}$ is injective. Now,
\begin{eqnarray}
\mathbf{d}\left(f^{\ell}_{ji}\left(\mathbf{p}^{\ell}\right)\right) & = & \mathbf{d}\left(h^{\ell}_{ki}\left(g^{\ell}_{jk}\left(\mathbf{p}^{\ell}\right)\right)\right)\nonumber\\
& \geq & \mathbf{d}\left(g^{\ell}_{ji}\left(\mathbf{p}^{\ell}\right)\right)\nonumber\\
& \geq & \mathbf{d}\left(\mathbf{p}^{\ell}\right)\nonumber
\end{eqnarray}
and thus,~$i\succeq j$ (explained by~$\left(f^{\ell}_{ji}\right)$).
\end{proof}

Now, we remark that to each preorder, there is an associated equivalence relation defined as
\begin{equation}
i\cong j \Leftrightarrow \left(i\succeq j \wedge j\succeq i \right).\nonumber
\end{equation}
Checking that this is an equivalence relation is trivial. The next Theorem, provides a more explicit characterization for our preorder~$\succeq$.

\begin{theorem}\label{th:characterization}
$i \cong j$, if and only if there exists an adapted sequence~$\left(f^{\ell}_{ji}\right)_{\ell}$ such that
\begin{itemize}
  \item $f^{\ell}_{ji}\,:\,\mathcal{P}_{\ell}(j)\rightarrow \mathcal{P}_{\ell}(i)$ is bijective;
  \item $\mathbf{d}\left(f^{\ell}_{ji}\left(\mathbf{p}^{\ell}\right)\right)=\mathbf{d}\left(\mathbf{p}^{\ell}\right)$;
\end{itemize}
for all~$\mathbf{p}^{\ell} \in \mathcal{P}_{\ell}(j)$ and for all~$\ell$.
\end{theorem}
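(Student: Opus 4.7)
The plan is to prove the two implications separately, and in fact to show that in the hard direction any adapted sequence explaining $i\succeq j$ is automatically forced (by the existence of one explaining $j\succeq i$) to be bijective and degree\emph{-equal}, so the same sequence $(f^{\ell}_{ji})$ witnesses the refined statement.

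For the $(\Leftarrow)$ direction, suppose we are given an adapted sequence $(f^{\ell}_{ji})$ that is bijective and satisfies $\mathbf{d}(f^{\ell}_{ji}(\mathbf{p}^{\ell}))=\mathbf{d}(\mathbf{p}^{\ell})$. Then it trivially explains $i\succeq j$, since injectivity and $\geq$ follow from bijectivity and equality. To obtain $j\succeq i$ I would take the inverse sequence $g^{\ell}_{ij}=(f^{\ell}_{ji})^{-1}$, which Lemma~\ref{lem:inversesequence} guarantees is an adapted sequence; it is bijective and degree-preserving by construction, so it explains $j\succeq i$ and hence $i\cong j$.

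For the $(\Rightarrow)$ direction, pick any adapted sequences $(f^{\ell}_{ji})$ explaining $i\succeq j$ and $(g^{\ell}_{ij})$ explaining $j\succeq i$. Because $f^{\ell}_{ji}$ and $g^{\ell}_{ij}$ are injections between the finite sets $\mathcal{P}_{\ell}(j)$ and $\mathcal{P}_{\ell}(i)$, a cardinality argument gives $|\mathcal{P}_{\ell}(j)|=|\mathcal{P}_{\ell}(i)|$ for every~$\ell$, and hence each $f^{\ell}_{ji}$ is already a bijection. This disposes of the bijectivity requirement.

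The main step, and the one I expect to be the only nontrivial piece, is promoting the componentwise inequality $\mathbf{d}(f^{\ell}_{ji}(\mathbf{p}^{\ell}))\geq \mathbf{d}(\mathbf{p}^{\ell})$ to equality. The key is a fiber-counting argument based on the adapted property at level $\ell+1$: since $\pi_{\leq \ell}$ commutes with $f^{\ell+1}_{ji}$, the bijection $f^{\ell+1}_{ji}$ sends the fiber $\pi_{\leq\ell}^{-1}(\mathbf{p}^{\ell})$ bijectively onto $\pi_{\leq\ell}^{-1}(f^{\ell}_{ji}(\mathbf{p}^{\ell}))$; but the fiber of $\pi_{\leq\ell}$ over a path ending at a node $v$ has cardinality exactly $d(v)$, since extending a length-$\ell$ path to length $\ell+1$ amounts to picking a neighbor of its last vertex. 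Equating the two fiber sizes yields $d(j_{\ell})=d(i_{\ell})$, where $j_{\ell}$ and $i_{\ell}$ are the last vertices of $\mathbf{p}^{\ell}$ and $f^{\ell}_{ji}(\mathbf{p}^{\ell})$. To upgrade this to equality $d(j_{k})=d(i_{k})$ at every intermediate coordinate $k\leq \ell$, I would apply the same observation to the truncated path $\pi_{\leq k}(\mathbf{p}^{\ell})\in\mathcal{P}_{k}(j)$, invoking the extension property~\eqref{eq:extends} to identify its $f^{k}_{ji}$-image with $\pi_{\leq k}(f^{\ell}_{ji}(\mathbf{p}^{\ell}))$. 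Combining these observations shows that the originally chosen sequence witnesses both refined conditions, completing the proof.
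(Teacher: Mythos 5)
Your proof is correct, and while the easy direction and the bijectivity step coincide with the paper's (inverse sequences via Lemma~\ref{lem:inversesequence}, and the finite Schr\"oder--Bernstein argument forcing each $f^{\ell}_{ji}$ to be a bijection), your argument for upgrading $\mathbf{d}(f^{\ell}_{ji}(\mathbf{p}^{\ell}))\geq\mathbf{d}(\mathbf{p}^{\ell})$ to equality is genuinely different. The paper defines an interlaced dynamics alternating $f^{\ell}_{ji}$ and $g^{\ell}_{ij}$ on $\mathcal{P}_{\ell}(i)\cup\mathcal{P}_{\ell}(j)$, observes that the degree vector is monotone along orbits, forces every path to be periodic by finiteness, and concludes that the degree is constant on each (periodic) orbit. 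Your fiber-counting argument instead exploits only the bijectivity and the extension structure of the adapted sequence: the $\pi_{\leq\ell}$-fiber over a path has cardinality equal to the degree of its last vertex, and matching fiber sizes forces $d(j_{\ell})=d(i_{\ell})$, with intermediate coordinates handled by truncation via~\eqref{eq:extends}. One small point you pass over: injectivity of $f^{\ell+1}_{ji}$ restricted to a fiber is immediate from~\eqref{eq:extends}, but surjectivity onto the target fiber needs either the contrapositive of~\eqref{eq:extends2} or a global count (the fibers partition domain and codomain, the index map $f^{\ell}_{ji}$ is a bijection, each fiber injects into its image fiber, and total cardinalities agree, so all these injections are bijections); this is routine but should be said. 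Your route is more elementary and in fact proves something slightly stronger than the paper does: an adapted sequence of bijections automatically preserves the degree vector, so the monotonicity hypothesis from the definition of $\succeq$ is not even needed for this step, whereas the paper's periodicity argument uses it essentially.
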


\begin{proof}
The \textbf{if} part is trivial: it follows as a corollary to Lemma~\ref{lem:inversesequence}.

We are left to prove the \textbf{only if} part. Let~$i\cong j$. Then, by definition,~$i\succeq j$ and~$j\succeq i$. In other words, there exist two adapted sequences~$\left(f_{ji}^{\ell}\right)$ and~$\left(g_{ij}^{\ell}\right)$ that explain~$i\succeq j$ and~$j\succeq i$, respectively. First, note that since~$f_{ji}^{\ell}\,:\,\mathcal{P}_{\ell}(j)\rightarrow \mathcal{P}_{\ell}(i)$ and~$g_{ij}^{\ell}\,:\,\mathcal{P}_{\ell}(i)\rightarrow \mathcal{P}_{\ell}(j)$ are injective maps, then~$\left|\mathcal{P}_{\ell}(i)\right|=\left|\mathcal{P}_{\ell}(j)\right|$ for all~$\ell$ (Schr\"{o}der-Bernstein Theorem) and thus, since the graph is finite, we have~$\left|\mathcal{P}_{\ell}(i)\right|=\left|\mathcal{P}_{\ell}(j)\right|< \infty$, and the injective maps~$f_{ji}^{\ell}\,:\,\mathcal{P}_{\ell}(j)\rightarrow \mathcal{P}_{\ell}(i)$ and~$g_{ij}^{\ell}\,:\,\mathcal{P}_{\ell}(i)\rightarrow \mathcal{P}_{\ell}(j)$ are in fact bijections.

We are to show that~$f^{\ell}_{ji}$ (or~$g^{\ell}_{ij}$) preserves the degree sequence. Define the following interlaced dynamics on~$\mathcal{P}_{\ell}(i)\cup \mathcal{P}_{\ell}(j)$
\begin{itemize}
  \item If $\mathbf{p}^{\ell}\in\mathcal{P}_{\ell}(j)$: $\mathbf{p}^{\ell}\rightarrow f^{\ell}_{ji}\left(\mathbf{p}^{\ell}\right)\rightarrow g^{\ell}_{ij}\left(f^{\ell}_{ji}\left(\mathbf{p}^{\ell}\right)\right)\rightarrow f^{\ell}_{ji}\left(g^{\ell}_{ij}\left(f^{\ell}_{ji}\left(\mathbf{p}^{\ell}\right)\right)\right)\rightarrow \ldots$
  \item If $\mathbf{p}^{\ell}\in\mathcal{P}_{\ell}(i)$: $\mathbf{p}^{\ell}\rightarrow g^{\ell}_{ij}\left(\mathbf{p}^{\ell}\right)\rightarrow f^{\ell}_{ji}\left(g^{\ell}_{ij}\left(\mathbf{p}^{\ell}\right)\right)\rightarrow g^{\ell}_{ij}\left(f^{\ell}_{ji}\left(g^{\ell}_{ij}\left(\mathbf{p}^{\ell}\right)\right)\right)\rightarrow \ldots$
\end{itemize}
We first remark that the vector degree is monotonous over the orbits of the above interlaced dynamical system. Therefore, if~$\mathbf{p}^{\ell}$ is periodic, then
\begin{itemize}
  \item $\mathbf{d}\left(\mathbf{p}^{\ell}\right)=\mathbf{d}\left(f^{\ell}_{ji}\left(\mathbf{p}^{\ell}\right)\right)$, if~$\mathbf{p}^{\ell}\in\mathcal{P}_{\ell}(j)$;
  \item $\mathbf{d}\left(\mathbf{p}^{\ell}\right)=\mathbf{d}\left(g^{\ell}_{ij}\left(\mathbf{p}^{\ell}\right)\right)$, if~$\mathbf{p}^{\ell}\in\mathcal{P}_{\ell}(i)$.
\end{itemize}
In other words, if~$\mathbf{p}^{\ell}\in \mathcal{P}_{\ell}(i)\cup \mathcal{P}_{\ell}(j)$ is a periodic point (or path) w.r.t. the interlaced dynamics, then the degree is preserved by the bijections~$f^{\ell}_{ji}$ and~$g^{\ell}_{ij}$. We claim that any point (or path)~$\mathbf{p}$ is periodic. Indeed, let~$\mathbf{p}\in \mathcal{P}_{\ell}(i)$ and assume that~$\mathbf{p}$ is not periodic. This in particular implies that all the points in the backwards iterates
\begin{equation}
\left(f^{\ell}_{ji}\right)^{-1}(\mathbf{p}),\left(g^{\ell}_{ji}\right)^{-1}\left(\left(f^{\ell}_{ji}\right)^{-1}(\mathbf{p})\right),\ldots\nonumber
\end{equation}
are not periodic as well. In other words, the set of all backwards iterates has infinite cardinality, which is a contradiction as~$\left|\mathcal{P}_{\ell}(i)\cup \mathcal{P}_{\ell}(j)\right|<\infty$. Therefore, we conclude that~$\left(f^{\ell}_{ji}\right)$ is an adapted sequence of bijections preserving the vector degree.
\end{proof}

\begin{theorem}
The equivalence relation~$\cong$ is equal to the coarsest equitable partition (CEP).
\end{theorem}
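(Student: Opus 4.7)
The plan is to establish both inclusions between $\cong$ and the CEP equivalence $\approx$, using Theorem~\ref{th:characterization} (which characterizes $\cong$ by the existence of a bijective adapted sequence preserving the degree vector) and the ultimate-degree-sequence characterization of the CEP, $i\approx j \Leftrightarrow D(i)=D(j)$.

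For the forward direction $\cong\,\Rightarrow\,\approx$, let $(f^\ell_{ji})_\ell$ be a bijective adapted degree-preserving sequence. The key structural observation is that, for every prefix $\mathbf{q}^m=(j,j_1,\ldots,j_m)\in\mathcal{P}_m(j)$ with image $f^m_{ji}(\mathbf{q}^m)=(i,i_1,\ldots,i_m)$, the adapted plus extending properties force $f^{m+\ell}_{ji}$ to restrict to a bijection between walks extending $\mathbf{q}^m$ and walks extending $f^m_{ji}(\mathbf{q}^m)$; identifying these restrictions with maps $\mathcal{P}_\ell(j_m)\to\mathcal{P}_\ell(i_m)$ via the shift $(j_0,\ldots,j_{m+\ell})\mapsto(j_m,\ldots,j_{m+\ell})$ produces, for every $\ell$, a degree-preserving bijection that again forms an adapted sequence. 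Hence $j_m\cong i_m$ for every pair of matched endpoints. I would then prove $d^{(k)}(j)=d^{(k)}(i)$ by induction on $k$: the base $k=0$ is degree preservation at the root; for the step, the bijection $f^1_{ji}$ induces a bijection $\phi:\mathcal{N}(j)\to\mathcal{N}(i)$ with $j_1\cong\phi(j_1)$ for each neighbor, and applying the inductive hypothesis to each such pair yields the multiset identity $\{d^{(k-1)}(j_1):j_1\in\mathcal{N}(j)\}=\{d^{(k-1)}(\phi(j_1)):j_1\in\mathcal{N}(j)\}$, i.e., $d^{(k)}(j)=d^{(k)}(i)$.

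For the reverse direction $\approx\,\Rightarrow\,\cong$, I would construct an adapted, bijective, degree-preserving sequence $(f^\ell_{ji})_\ell$ by induction on $\ell$, carrying the invariant that whenever a walk $(j,j_1,\ldots,j_\ell)$ maps to $(i,i_1,\ldots,i_\ell)$, the pair $(j_m,i_m)$ lies in a common CEP class for every $m\le\ell$. The base $f^0_{ji}:(j)\mapsto(i)$ is valid since $j\approx i$. Given $f^\ell_{ji}$, extend it prefix by prefix: for each walk $(j,\ldots,j_\ell)$ with image $(i,\ldots,i_\ell)$, the equitable-partition identity $|\mathcal{N}(j_\ell)\cap\mathcal{C}|=|\mathcal{N}(i_\ell)\cap\mathcal{C}|$ (valid for every CEP class $\mathcal{C}$ because $j_\ell\approx i_\ell$) yields a class-preserving bijection $\phi:\mathcal{N}(j_\ell)\to\mathcal{N}(i_\ell)$, and one sets $f^{\ell+1}_{ji}((j,\ldots,j_\ell,j_{\ell+1}))=(i,\ldots,i_\ell,\phi(j_{\ell+1}))$. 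Class preservation yields degree preservation since CEP-equivalent nodes share the same degree.

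The main obstacle is the verification, in the reverse construction, that the per-prefix local bijections $\phi$ glue into a genuine adapted sequence: one must check that $f^{\ell+1}_{ji}$ satisfies the prefix equality/separation conditions of an $(\ell+1)$-adapted function and extends $f^\ell_{ji}$ in the sense of~\eqref{eq:extends}--\eqref{eq:extends2}. This is routine because each $\phi$ is chosen freshly on each prefix-fiber (so there is no coherence constraint across distinct prefixes), bijectivity of $f^{\ell+1}_{ji}$ is inherited from $f^\ell_{ji}$ together with bijectivity of each $\phi$, and the only substantive input is the equitable-partition identity --- precisely the defining property of the CEP. Together with Theorem~\ref{th:characterization} this yields $i\cong j$.
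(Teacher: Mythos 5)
Your proposal is correct, and its second half (building a class-preserving adapted sequence of bijections from a CEP-equivalence $i\approx j$ and invoking Theorem~\ref{th:characterization}) coincides with the paper's Part~II, up to the cosmetic difference that you choose a fresh fiber bijection $\phi$ on each prefix whereas the paper fixes one neighborhood bijection $f_{nm}$ per equivalent pair and composes these along walks. The forward half takes a somewhat different route in its final step: the paper applies the shift/restriction argument once, to conclude that $\cong$ is itself an equitable partition, and then invokes the quoted corollary that the iterated-degree-sequence partition is coarser than every equitable partition; you instead iterate the same shift argument (matched endpoints of matched walks are again $\cong$-related) and run an induction on $k$ to verify $d^{(k)}(i)=d^{(k)}(j)$ directly, hence $D(i)=D(j)$. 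Your route is more self-contained --- it needs only the $D(i)=D(j)$ characterization of the CEP rather than its maximality among equitable partitions --- at the cost of an extra induction; the paper's route is shorter because the heavy lifting is delegated to the cited corollary. Both versions leave the same routine verifications implicit (that the shifted fiber restrictions form adapted sequences, and that the glued local bijections satisfy the prefix equality/separation and extension conditions), so the level of rigor is comparable.
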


\begin{proof}
We first show that~$\cong$ conforms to an equitable relation. Then, we show that any other equitable partition in the graph is finer then the coloring of~$\cong$.

\textbf{Part I: $\cong$ is equitable.}

Let~$i\cong j$. Take an arbitrary neighbour~$j_1\in\mathcal{N}(j)$ of~$j$ and let us show that there exists~$i_1\in \mathcal{N}(i)$ with~$j_1\cong i_1$. This establishes the first part.

Indeed, let~$i_1\overset{\Delta}=\pi_{2}\left(f^{1}_{ji}\left(j,j_1\right)\right)$. Our claim is that~$j_1\cong i_1$. Define~$g^{\ell}_{j_1i_1}(\mathbf{p})\overset{\Delta}=\pi_{2\leq \cdot \leq \ell+1}\left(f^{\ell+1}_{ji}(j,\mathbf{p})\right)$ for all~$\mathbf{p}\in \mathcal{P}_{\ell}(j_1)$. We observe that~$\left(g^{\ell}_{j_1i_1}\right)$ fulfills the conditions of Theorem~\ref{th:characterization} and thus,~$j_1\cong i_1$.

\textbf{Part II: $\cong$ is the coarsest equitable relation.}

We will prove that given another equitable relation~$\approx$, then
\begin{equation}
i\approx j \Rightarrow i\cong j.\nonumber
\end{equation}
For each pair~$n,m$ of $\approx$-equivalent nodes~$n\approx m$, define
\begin{equation}
f_{nm}\,:\,\mathcal{N}(n)\rightarrow \mathcal{N}(m)\nonumber
\end{equation}
to be any bijection preserving the $\approx$-classes, i.e., $f_{nm}(k)\approx k$ for all~$k\in\mathcal{N}(n)$. Note that this is possible as~$\approx$ is equitable: $n,m$ have the same degree (hence we can choose a bijection), and have the same number of neighbors per class (hence we can choose a bijection that preserves classes).

Now, assume~$i\approx j$ and define the sequence~$\left(f^{\ell}_{ji}\right)_{\ell}$ by induction as
\begin{equation}
\begin{array}{rcl} f_{ji}^{1}: \mathcal{P}_{1}(j) & \to & \mathcal{P}_{1}(i)\\ \left(j,j_1\right) & \mapsto & \left(i,i_1\right) \end{array}\nonumber
\end{equation}
where~$i_1=f_{ji}(j_1)$, with~$j_1\in \mathcal{N}(j)$ and
\begin{equation}
\begin{array}{rcl} f_{ji}^{n}: \mathcal{P}_{n}(j) & \to & \mathcal{P}_{n}(i)\\ \left(j,j_1,\ldots,j_{n}\right) & \mapsto & \left(i,i_1,\ldots,i_n\right) \end{array}\nonumber
\end{equation}
where,
\begin{equation}
i_1=f_{ji}(j_1),\,i_2=f_{j_{1}i_{1}}(j_2),\,\ldots,i_n=f_{j_{n-1}i_{n-1}}(j_n)\nonumber
\end{equation}
with~$j_1\in \mathcal{N}(j)$, and~$j_k\in\mathcal{N}(j_{k-1})$ for all~$k\leq n$, i.e.,
\begin{equation}
\left(j,j_1,\ldots,j_n\right)\in\mathcal{P}_{n}(j).\nonumber
\end{equation}
Observe first that~$f_{ji}^{n}$ is well constructed as~$j_n\approx i_n$ for each $n$ above -- as one can show by induction -- and thus,~$f_{j_ni_n}$ makes sense. Moreover, the sequence~$\left(f^{\ell}_{ji}\right)$ is an adapted sequence of bijections that preserves the vector degree. By Theorem~\ref{th:characterization}, we conclude that~$i\cong j$.
\end{proof}

The next Theorem provides an alternative practical characterization for the preorder thus constructed: it is an inductive preorder. In fact, in order to prove that the logistic dynamical system preserves the preorder, we need the result in Theorem~\ref{th:induction}.

\begin{theorem}[Induction property of $\succeq$]\label{th:induction}
\begin{equation}\label{eq:inductive}
i\succeq j \Leftrightarrow \exists{f\,:\,\mathcal{N}(j)\rightarrow \mathcal{N}(i) \mbox{ injective:}} f(k)\succeq k\,\, \forall{k\in\mathcal{N}(j)}
\end{equation}
\end{theorem}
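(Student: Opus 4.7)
The plan is to prove each direction by extracting or gluing adapted sequences level by level.

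\textbf{Forward direction.} Suppose $i\succeq j$ is explained by an adapted sequence $\left(f^\ell_{ji}\right)_\ell$. Since any element of $\mathcal{P}_1(i)$ has the form $(i,\cdot)$, I can write $f^1_{ji}(j,k)=(i,f(k))$ for a unique $f:\mathcal{N}(j)\to\mathcal{N}(i)$, which inherits injectivity from $f^1_{ji}$. To exhibit $f(k)\succeq k$ for each $k\in\mathcal{N}(j)$, I would define $g^\ell_{k,f(k)}$ by stripping the first coordinate from the image of $f^{\ell+1}_{ji}$ applied to the augmented path $(j,k,p_1,\ldots,p_\ell)$. Iterating the sequence-extension property on $\left(f^\ell_{ji}\right)_\ell$ back to level $1$ shows that the second coordinate of $f^{\ell+1}_{ji}(j,k,\ldots)$ is always $f(k)$, so $g^\ell_{k,f(k)}$ genuinely lands in $\mathcal{P}_\ell(f(k))$. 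Adaptedness, sequence extension, injectivity and degree dominance descend to $\left(g^\ell_{k,f(k)}\right)_\ell$ directly: two paths in $\mathcal{P}_\ell(k)$ agreeing on their first $m+1$ coordinates correspond, after prepending $j$, to paths in $\mathcal{P}_{\ell+1}(j)$ agreeing on their first $m+2$ coordinates, and symmetrically for disagreement.

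\textbf{Backward direction.} Given an injection $f:\mathcal{N}(j)\to\mathcal{N}(i)$ with $f(k)\succeq k$ explained by an adapted sequence $\left(h^\ell_{k,f(k)}\right)_\ell$ for each $k\in\mathcal{N}(j)$, I would glue by defining
\begin{equation}
F^\ell_{ji}(j,j_1,\ldots,j_\ell)\overset{\Delta}=\bigl(i,\,h^{\ell-1}_{j_1,f(j_1)}(j_1,j_2,\ldots,j_\ell)\bigr),\qquad \ell\geq 1,\nonumber
\end{equation}
interpreting $h^0$ as the obvious singleton map so that $F^1(j,j_1)=(i,f(j_1))$. The image is a valid element of $\mathcal{P}_\ell(i)$ since $f(j_1)\in\mathcal{N}(i)$. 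The verification that $\left(F^\ell_{ji}\right)_\ell$ is an adapted sequence meeting the two requirements of Definition~\ref{def:preorder} splits according to whether two compared paths share their first internal coordinate $j_1$. When they do, the burden on adaptedness, extension, and injectivity passes directly to the hypothesized $\left(h^\ell_{j_1,f(j_1)}\right)_\ell$; when they do not, the injectivity of $f$ separates the second coordinates $f(j_1)\neq f(\widetilde{\jmath}_1)$ of the images, discharging both the non-equality clause~\eqref{eq:notequal} of adaptedness and injectivity of $F^\ell$. Degree dominance at the zeroth coordinate is $d(i)\geq d(j)$, which follows from the existence of the injection $f$; at the remaining coordinates it is inherited coordinate-wise from each $h^{\ell-1}_{j_1,f(j_1)}$.

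\textbf{Main obstacle.} The only real difficulty is bookkeeping --- tracking the $\ell$-versus-$(\ell-1)$ shift between $F^\ell$ on $\mathcal{P}_\ell(j)$ and $h^{\ell-1}_{j_1,f(j_1)}$ on $\mathcal{P}_{\ell-1}(j_1)$, and aligning $\pi_{\leq m}$ correctly with the adapted-sequence axioms when verifying sequence extension. Conceptually the statement is a structural tautology: an adapted sequence witnessing $i\succeq j$ is essentially determined by its level-one map together with witnessing adapted sequences for the induced pairs $\bigl(k,f(k)\bigr)$ one neighborhood out.
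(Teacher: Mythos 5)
Your proof is correct and follows essentially the same route as the paper's: in the forward direction you extract $f$ from $f^1_{ji}$ and obtain the witnessing sequences for $f(k)\succeq k$ by projecting $f^{\ell+1}_{ji}$ onto coordinates $2$ through $\ell+2$ of the prepended paths, and in the backward direction you glue exactly as the paper does via $F^\ell_{ji}(j,j_1,\ldots,j_\ell)=(i,h^{\ell-1}_{j_1,f(j_1)}(j_1,\ldots,j_\ell))$. Your verification sketch (case split on whether compared paths share their first internal coordinate, and the observation that the second coordinate of $f^{\ell+1}_{ji}(j,k,\ldots)$ is pinned to $f(k)$ by the extension property) is in fact more explicit than the paper's, which merely asserts these checks.
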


\begin{proof}
\textbf{We start by proving the implication~`$\Rightarrow$'.}

Define~$f(k)\overset{\Delta}=\pi_{2}\left(f^{1}_{ji}(j,k)\right)$. Note that~$f\,:\,\mathcal{N}(j)\rightarrow \mathcal{N}(i)$ is injective. Also, define the following sequence
\begin{equation}
g^{\ell}_{kf(k)}\left(k,k_1,k_2,\ldots,k_{\ell}\right)=\pi_{2\leq \cdot \leq \ell+2}\left(f^{\ell+1}_{ji}(j,k,k_1,\ldots,k_{\ell})\right)\nonumber
\end{equation}
and note that~$\left(g^{\ell}_{kf(k)}\,:\, \mathcal{P}_{\ell}(k)\rightarrow \mathcal{P}_{\ell}(f(k))\right)_{\ell}$ is an adapted sequence. Note also that~$g^{\ell}_{kf(k)}\,:\,\mathcal{P}_{\ell}(k)\rightarrow \mathcal{P}_{\ell}(f(k))$ is injective for all~$\ell$. The monotonicity on the vector degree is clearly inherited by~$f^{\ell}_{ji}$. Therefore,~$f(k)\succeq k$.

\textbf{Now, we prove the implication~`$\Leftarrow$'.}

By definition, we have~$f(k)\succeq k$ if and only if there exists an adapted sequence~$\left(g^{\ell}_{kf(k)}\right)$ that explains the inequality. Now, define
\begin{equation}
f^{\ell}_{ji}\left(j, j_1,j_2,\ldots,j_{\ell}\right)=\left(i, g^{\ell-1}_{j_1 f(j_1)}\left(j_1,j_2,\ldots,j_{\ell}\right)\right).
\end{equation}
$\left(f^{\ell}_{ji}\right)$ thus defined is an adapted sequence that explains~$i\succeq j$.
\end{proof}

In fact, our preorder is not only inductive in the sense of equation~\eqref{eq:inductive}, but it is the maximum preorder amongst the inductive preorders as explained next. Let~$\mathcal{O}$ be the set of inductive preorders on a graph~$G$ endowed with the following partial order~$\leq$
\begin{equation}
\widetilde{\succeq}\,\, \leq \,\, \succeq \,\,\Longleftrightarrow \,\,\left(i \,\,\widetilde{\succeq}\,\, j \Rightarrow i \succeq j\right).\nonumber
\end{equation}
It is trivial to check that~$\leq$ is a partial order on~$\mathcal{O}$.

\begin{theorem}
$\succeq$ is the greatest element (a.k.a. maximum) on~$\mathcal{O}$ with respect to the partial order~$\leq$,
\begin{equation}
\widetilde{\succeq}\,\, \leq \,\,\succeq\,\,\forall{\widetilde{\succeq}\in \mathcal{O}},
\end{equation}
i.e., any other preorder~$\widetilde{\succeq}\in \mathcal{O}$ is not only comparable to~$\succeq$, but it is upperbounded by it.
\end{theorem}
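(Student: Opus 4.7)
The plan is to establish two facts in sequence. First, $\succeq$ is itself an inductive preorder, so $\succeq\in\mathcal{O}$; this is exactly the content of Theorem~\ref{th:induction}. Second, and this is the substantive part, any $\widetilde{\succeq}\in\mathcal{O}$ satisfies $\widetilde{\succeq}\leq\,\succeq$, i.e., $i\,\widetilde{\succeq}\,j\Rightarrow i\succeq j$. The strategy for the second claim is to assume $i\,\widetilde{\succeq}\,j$ and construct an explicit adapted sequence $(f^{\ell}_{ji})$ of injective, degree-dominating maps that, by Definition~\ref{def:preorder}, explains $i\succeq j$. All the raw material will come from iterating the inductive characterization of $\widetilde{\succeq}$.

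For the construction, I would first invoke finiteness of $G$ and fix, once and for all, an injective witness $\widetilde{f}_{m,n}\colon\mathcal{N}(n)\to\mathcal{N}(m)$ for every ordered pair $(m,n)$ with $m\,\widetilde{\succeq}\,n$, satisfying $\widetilde{f}_{m,n}(k)\,\widetilde{\succeq}\,k$ for every $k\in\mathcal{N}(n)$; existence of each such witness is precisely the inductive property of $\widetilde{\succeq}$. I would then define $f^{\ell}_{ji}\colon \mathcal{P}_{\ell}(j)\to\mathcal{P}_{\ell}(i)$ coordinate-by-coordinate along a path: given $(j,k_1,\dots,k_{\ell})\in\mathcal{P}_{\ell}(j)$, set $k_0:=j$, $i_0:=i$, and recursively $i_s:=\widetilde{f}_{i_{s-1},k_{s-1}}(k_s)$, finally letting $f^{\ell}_{ji}(j,k_1,\dots,k_{\ell}):=(i,i_1,\dots,i_{\ell})$.

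Three properties then need to be checked. Adaptedness of each $f^{\ell}_{ji}$ and the extension property linking $f^{\ell+1}_{ji}$ to $f^{\ell}_{ji}$ are immediate from the prefix-driven recipe, since the image of $(j,k_1,\dots,k_s)$ depends only on the first $s$ input coordinates. Injectivity of $f^{\ell}_{ji}$ follows from a first-disagreement argument: if two paths agree through position $s$ but differ at position $s+1$, then the same $(i_{s-1},k_{s-1})$ are seen at step $s+1$, so the same injective witness $\widetilde{f}_{i_s,k_s}$ is applied to two distinct inputs, and the images must differ at coordinate $s+1$. Degree domination reduces to the pointwise bound $d(i_s)\geq d(k_s)$, which holds because $i_s\,\widetilde{\succeq}\,k_s$ by construction and the existence of the injection $\widetilde{f}_{i_s,k_s}\colon\mathcal{N}(k_s)\hookrightarrow\mathcal{N}(i_s)$ forces $d(i_s)\geq d(k_s)$ (a free consequence of $\widetilde{\succeq}\in\mathcal{O}$).

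I expect the principal obstacle to be purely organizational rather than mathematical: the bookkeeping that guarantees the recursively chosen witnesses splice together into a genuine adapted sequence, and in particular that $f^{\ell+1}_{ji}$ extends $f^{\ell}_{ji}$ in the precise sense of equations~\eqref{eq:extends}--\eqref{eq:extends2}. Fixing the witnesses $\widetilde{f}_{m,n}$ globally at the outset — rather than making local choices for each $\ell$ — sidesteps this, because then all $f^{\ell}_{ji}$ agree on shared prefixes by definition. Once this is in place, Definition~\ref{def:preorder} yields $i\succeq j$, establishing $\widetilde{\succeq}\,\leq\,\succeq$ and completing the proof that $\succeq$ is the maximum of $(\mathcal{O},\leq)$.
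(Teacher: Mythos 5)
Your proposal is correct and follows essentially the same route as the paper: fix, for every pair $m\,\widetilde{\succeq}\,n$, a global injective witness $\mathcal{N}(n)\to\mathcal{N}(m)$ preserving $\widetilde{\succeq}$, then splice these witnesses coordinate-by-coordinate along paths to build the adapted sequence $(f^{\ell}_{ji})$ explaining $i\succeq j$. If anything, you supply more of the verification (the first-disagreement injectivity argument and the degree bound $d(i_s)\geq d(k_s)$ from the witness injection) than the paper, which merely asserts these properties.
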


\begin{proof}
Let~$\widetilde{\succeq}$ be another inductive preorder. For each pair of nodes~$\left(n,m\right)$ with~$n \,\,\widetilde{\succeq}\,\, m$, choose an injective function
\begin{equation}
f_{mn}\,:\,\mathcal{N}(m)\rightarrow \mathcal{N}(n)\nonumber
\end{equation}
that preserves~$\widetilde{\succeq}$, i.e.,~$f(k)\,\widetilde{\succeq} k$ for all~$k\in \mathcal{N}(m)$. Now, let~$i \widetilde{\succeq} j$ and define the sequence~$\left(f^{\ell}_{ji}\right)_{\ell}$ by induction:
\begin{equation}
\begin{array}{rcl} f_{ji}^{1}: \mathcal{P}_{1}(j) & \to & \mathcal{P}_{1}(i)\\ \left(j,j_1\right) & \mapsto & \left(i,i_1\right) \end{array}\nonumber
\end{equation}
where~$i_1=f_{ji}(j_1)$, with~$j_1\in \mathcal{N}(j)$ and
\begin{equation}
\begin{array}{rcl} f_{ji}^{n}: \mathcal{P}_{n}(j) & \to & \mathcal{P}_{n}(i)\\ \left(j,j_1,\ldots,j_{n}\right) & \mapsto & \left(i,i_1,\ldots,i_n\right) \end{array}\nonumber
\end{equation}
where,
\begin{equation}
i_1=f_{ji}(j_1),\,i_2=f_{j_{1}i_{1}}(j_2),\,\ldots,i_n=f_{j_{n-1}i_{n-1}}(j_n)\nonumber
\end{equation}
with~$j_1\in \mathcal{N}(j)$, and~$j_k\in\mathcal{N}(j_{k-1})$ for all~$k\leq n$, i.e.,
\begin{equation}
\left(j,j_1,\ldots,j_n\right)\in\mathcal{P}_{n}(j).\nonumber
\end{equation}
Note that~$f^{\ell}_{ji}$ is well defined. Also, each~$f^{\ell}_{ji}$ is injective and preserves the degree sequence. Moreover, the sequence~$\left(f^{\ell}_{ji}\right)_{\ell}$ is adapted.
\end{proof}

\section{Logistic dynamics preserves the preorder}\label{sec:preserva}

In this section, we show that the logistic dynamics~\eqref{eq:logistic} preserves the preorder~$\succeq$. As a result, this leads to the emergence of a non-trivial invariant set for this type of dynamical system over networks, whose characterization is tied to the underlying graph structure.

\begin{theorem}\label{th:preservesineq}
Let~$\mathbf{y}(0)\in\left[0,1\right]^{N}$ be so that
\begin{equation}
y_{i}(0)\geq y_{j}(0)\,\forall{i\succeq j}.\label{eq:assumption}
\end{equation}
Then,
\begin{equation}
y_{i}(t,\mathbf{y}(0))\geq y_{j}\left(t,\mathbf{y}(0)\right)\,\,\forall{t\geq 0}.
\end{equation}
\end{theorem}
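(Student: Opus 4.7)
The plan is to show that the closed convex polytope
\[ S = \{\mathbf{y} \in [0,1]^{N} : y_i \geq y_j \text{ for all } i \succeq j\} \]
is forward invariant under the logistic flow~\eqref{eq:logistic}. Since the hypothesis~\eqref{eq:assumption} on $\mathbf{y}(0)$ is exactly $\mathbf{y}(0)\in S$, forward invariance of $S$ is precisely the conclusion of the theorem.

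The strategy is a standard subtangentiality (Nagumo-type) argument: because the right-hand side of~\eqref{eq:logistic} is polynomial (hence locally Lipschitz) and $S$ is closed and convex, $S$ is forward invariant iff $F(\mathbf{y})\in T_S(\mathbf{y})$ at every boundary point $\mathbf{y}\in S$, where $T_S$ is the Bouligand tangent cone. For our polytope this reduces to three conditions at a boundary point: (i) $F_i(\mathbf{y})\geq 0$ whenever $y_i=0$; (ii) $F_i(\mathbf{y})\leq 0$ whenever $y_i=1$; and (iii) $F_i(\mathbf{y})\geq F_j(\mathbf{y})$ whenever $i\succeq j$ and $y_i=y_j$. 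Items~(i) and~(ii) are immediate from~\eqref{eq:logistic}: at $y_i=0$ one has $F_i=\gamma\sum_{k\sim i}y_k\geq 0$, and at $y_i=1$ one has $F_i=-1\leq 0$.

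The substance of the proof is condition~(iii). Writing $y:=y_i=y_j$ and $s_k:=\sum_{l\sim k}y_l$, a one-line calculation from~\eqref{eq:logistic} yields
\[ F_i(\mathbf{y}) - F_j(\mathbf{y}) = \gamma(1-y)(s_i - s_j), \]
so it suffices to prove $s_i\geq s_j$. This is where Theorem~\ref{th:induction} becomes decisive: because $i\succeq j$ there exists an injection $f:\mathcal{N}(j)\to\mathcal{N}(i)$ with $f(k)\succeq k$ for every $k\in\mathcal{N}(j)$. Since $\mathbf{y}\in S$ we have $y_{f(k)}\geq y_k$, and since the neighbours of $i$ not lying in $f(\mathcal{N}(j))$ contribute nonnegatively,
\[ s_i \;\geq\; \sum_{k\in\mathcal{N}(j)} y_{f(k)} \;\geq\; \sum_{k\in\mathcal{N}(j)} y_k \;=\; s_j, \]
which, combined with $1-y\geq 0$, closes condition~(iii).

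The main obstacle is conceptual rather than computational: recognising that the inductive characterisation of $\succeq$ in Theorem~\ref{th:induction} is exactly the mechanism that lets us compare $s_i$ and $s_j$ by a pointwise pairing of neighbours; reasoning directly with the adapted sequences $(f^{\ell}_{ji})$ would be far clumsier. For readers who prefer to avoid invoking Nagumo by name, the same ingredients deliver a Gronwall estimate: setting $W(t)=\sum_{i\succeq j}(y_j(t)-y_i(t))^{+}$ and using the injections $f$ to bound $(s_j-s_i)^{+}$ by a linear combination of summands already present in $W$, one obtains $D^{+}W(t)\leq C\,W(t)$ for some constant $C>0$, and $W(0)=0$ then forces $W\equiv 0$.
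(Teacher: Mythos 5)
Your proposal is correct, and it takes a genuinely different route from the paper's. Both arguments use the same graph-theoretic engine -- the inductive characterization of Theorem~\ref{th:induction}, which supplies the injection $f:\mathcal{N}(j)\rightarrow\mathcal{N}(i)$ with $f(k)\succeq k$ and hence the comparison $\sum_{l\sim i}y_l\geq\sum_{k\sim j}y_k$ on $S$ -- but they diverge in how the boundary analysis is closed. The paper argues directly on trajectories: at a boundary time $s$ with $y_i(s)=y_j(s)$ it distinguishes the case where some neighbour comparison is strict (giving $\dot y_i(s)>\dot y_j(s)$) from the degenerate case where the first-order neighbourhoods match exactly, and in the latter case it escalates to higher-order derivatives via equations~\eqref{eq:high}--\eqref{eq:high2} and an induction on the first level $N$ at which the adapted sequence fails to be a degree-preserving bijection, concluding that the first non-vanishing derivative of $y_i-y_j$ is positive. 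Your Nagumo/subtangentiality argument bypasses this entirely: since $S$ is a closed convex polyhedron and the field is locally Lipschitz, forward invariance needs only the non-strict first-order condition $F_i(\mathbf{y})\geq F_j(\mathbf{y})$ on each active face, which your computation $F_i-F_j=\gamma(1-y)(s_i-s_j)\geq 0$ delivers (together with the easy checks at $y_i=0$ and $y_i=1$, which the paper leaves implicit). What your approach buys is rigor and economy -- the paper's higher-order case is the delicate part of its proof and is only sketched ("by inspection\ldots applying induction"), whereas the viscosity for degenerate boundary points is absorbed into the proof of Nagumo's theorem itself (or into your Gronwall variant with $W(t)=\sum_{i\succeq j}(y_j-y_i)^{+}$, which is essentially how Nagumo is proved for Lipschitz fields). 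What it costs is the citation of an external invariance theorem; the paper's derivative-escalation, if completed carefully, is more self-contained and also yields the slightly finer qualitative information about which derivative first separates $y_i$ from $y_j$.
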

Equivalently, the set
\begin{equation}
S\overset{\Delta}=\left\{\mathbf{y}\in\left[0,1\right]^{N} \,:\, y_i\geq y_k,\,\forall{i\succeq k} \right\}\label{eq:invariant}
\end{equation}
is invariant to the logistic dynamical system.

\begin{proof}

To show that~$S$ (defined in equation~\eqref{eq:invariant}) is invariant under the logistic dynamics (either continuous or discrete-time), we just need to check the qualitative behavior of the trajectory~$\left(\mathbf{y}(t)\right)$ at the border of~$S$ to establish that it does not escape from the set. We focus attention on the continuous time case.

Let~$i\succeq j$ with~$y_i(s)=y_j(s)$, for some time~$s$, i.e., the trajectory is at the border at time~$s$. Let the adaptive sequence~$\left(f_{ji}^{\ell}\right)$ explain the inequality~$i\succeq j$. For simplicity, denote~$f_{ji}:=f_{ji}^{1}$ with
\begin{equation}
f_{ji}\,:\,\mathcal{N}(j)\rightarrow \mathcal{N}(i).\nonumber
\end{equation}
Assume first that
\begin{equation}
y_{k}(s) > y_{f_{ji}(k)}(s)
\end{equation}
for some neighbor~$k\in\mathcal{N}(i)$. Given the induction property of the preorder in Theorem~\ref{th:induction} and the assumption in equation~\eqref{eq:assumption}, we conclude, by inspection on the ODE equation~\eqref{eq:logistic}, that
\begin{equation}
\overset{\cdot}y_i(s)>\overset{\cdot} y_j(s)\nonumber
\end{equation}
and thus, there exists~$\delta>0$ such that~$y_i(t)>y_j(t)$ for all $t\in\left.\left[s,s+\delta\right.\right)$. In other words, the trajectory does not scape the set.

In the case where the two nodes have the same configuration in a first order neighborhood, i.e.,~$f_{ji}$ is bijective and
\begin{equation}
y_k(s)=y_{f_{ji}(k)}(s)
\end{equation}
for all~$k\in \mathcal{N}(i)$, then the previous argument does not apply since~$\overset{\cdot}y_i(s)=\overset{\cdot}y_j(s)$ and we need to consider higher order derivatives. The equations for the $(n+1)$th order derivatives of the logistic system are given by
\begin{eqnarray}
\overset{(n+1)}y_i(t) & = & \gamma \sum_{k\sim i} \left[\overset{(n)}y_k(t)\left(1-y_i(t)\right)- \sum_{m=0}^{n-1}\left(\begin{array}{c} n \\ m \end{array}\right)\overset{(m)}y_k(t) \overset{(n-m)}y_i(t)\right]-\overset{(n)}y_i(t)\label{eq:high}\\
\overset{(n+1)}y_j(t) & = & \gamma \sum_{\ell\sim j} \left[\overset{(n)}y_{\ell}(t)\left(1-y_j(t)\right)- \sum_{m=0}^{n-1}\left(\begin{array}{c} n \\ m \end{array}\right)\overset{(m)}y_{\ell}(t) \overset{(n-m)}y_j(t)\right]-\overset{(n)}y_j(t).\label{eq:high2}
\end{eqnarray}
Given a vector of indexes~$I=\left(I_1,I_2,\ldots,I_N\right)\in\mathbb{N}^N$, define~$y_I:=\left(y_{I_1},\ldots,y_{I_N}\right)$. Assume, for some~$N\geq 1$, that the adapted sequence~$\left(f_{ji}^{\ell}\right)$ is such that~$f_{ji}^{m}\,:\,\mathcal{P}_{m}(j)\rightarrow \mathcal{P}_{m}(i)$ is bijective for all~$m\leq N$ and
\begin{equation}
y_{p^{m}}=y_{f^m_{ji}\left(p^{m}\right)},
\end{equation}
for all~$p^m\in \mathcal{P}^m(j)$, and~$m\leq N$, but at~$N+1$ we have (the strict dominance)
\begin{equation}
f_{ji}^{N+1}\mbox{ injective and not bijective, or } y_{p_{N+1}^{N+1}}<y_{f_{ji}\left(p^{N+1}_{N+1}\right)}.\nonumber
\end{equation}
Then, by inspection on the higher order derivative equations~\eqref{eq:high}-\eqref{eq:high2}, and applying induction on~$N$, we conclude that
\begin{eqnarray}
\overset{(n)}y_i(s) & = & \overset{(n)}y_j(s),\,\forall{n\leq N}\\
\overset{(N+1)}y_i(s) & > & \overset{(N+1)}y_j(s)
\end{eqnarray}
and, thus, there exists~$\delta>0$ such that~$y_i(t)>y_j(t)$ for all $t\in\left.\left[s,s+\delta\right.\right)$, that is, the trajectory does not escape the set~$S$.

%\begin{itemize}
%  \item $f^{\ell}_{ji}\,:\,\mathcal{P}_{\ell}(j)\rightarrow \mathcal{P}_{\ell}(i)$ is injective;
%  \item $\mathbf{d}\left(f^{\ell}_{ji}\left(\mathbf{p}^{\ell}\right)\right)\geq \mathbf{d}\left(\mathbf{p}^{\ell}\right)$
%\end{itemize}

\end{proof}

\begin{corollary}[LD preserves CEP]\label{co:preservesCEP}
The homogeneous logistic dynamical system preserves the CEP of the graph, i.e., if~~$\mathbf{y}(0)\in\left[0,1\right]^{N}$ is so that
\begin{equation}
y_{i}(0)= y_{j}(0)\,\forall{i\cong j}.
\end{equation}
Then,
\begin{equation}
y_{i}(t,\mathbf{y}(0))= y_{j}\left(t,\mathbf{y}(0)\right)\,\,\forall{t\geq 0}\,\,\,\,\,\forall{i\cong j}.
\end{equation}
\end{corollary}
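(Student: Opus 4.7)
The plan is to show that
\[
C \,\overset{\Delta}{=}\, \left\{\mathbf{y} \in [0,1]^N \,:\, y_i = y_j \text{ whenever } i \cong j\right\}
\]
is forward invariant under~\eqref{eq:logistic}. Since the hypothesis of the corollary is precisely $\mathbf{y}(0) \in C$, this invariance immediately delivers $y_i(t) = y_j(t)$ for every $t \geq 0$ and every $i \cong j$.

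To establish invariance I would verify that the right-hand side of~\eqref{eq:logistic} is tangent to $C$ at every point of $C$, and then invoke standard ODE invariance (Nagumo / uniqueness of Lipschitz flows). The two ingredients needed are already in place: (i)~Theorem~\ref{th:characterization}, which for each pair $i \cong j$ supplies a bijective degree-preserving adapted sequence $(f^\ell_{ji})_\ell$ and, in particular, a bijection $f^1_{ji} : \mathcal{N}(j) \to \mathcal{N}(i)$; and (ii)~the equitability of $\cong$ established in Part~I of the theorem identifying $\cong$ with the CEP, which---via the tail construction $g^\ell_{k k'}(\mathbf{p}) := \pi_{2 \leq \cdot \leq \ell+1}\bigl(f^{\ell+1}_{ji}(j, \mathbf{p})\bigr)$ with $k' := \pi_2\bigl(f^1_{ji}(j,k)\bigr)$---shows $k \cong f^1_{ji}(k)$ for every $k \in \mathcal{N}(j)$.

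The tangency check then reduces to one line. For $\mathbf{y} \in C$ and $i \cong j$, the neighbor-level equivalence $k \cong f^1_{ji}(k)$ together with $\mathbf{y} \in C$ yields $y_k = y_{f^1_{ji}(k)}$ for each $k \in \mathcal{N}(j)$, while bijectivity of $f^1_{ji}$ then delivers
\[
\sum_{k \sim j} y_k \;=\; \sum_{k \sim j} y_{f^1_{ji}(k)} \;=\; \sum_{\ell \sim i} y_\ell, \qquad y_j = y_i.
\]
Substituting these two identities into~\eqref{eq:logistic} gives $\dot y_i = \dot y_j$, so the vector field preserves every defining equation of $C$. Lipschitzness of the right-hand side of~\eqref{eq:logistic} on the compact set $[0,1]^N$ then forces the unique trajectory issued from any $\mathbf{y}(0) \in C$ to remain in $C$ for all $t \geq 0$.

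I do not foresee a substantive obstacle; the whole content reduces to recording the tangency. The only mildly delicate point is transcribing that the bijection $f^1_{ji}$ sends each $k \in \mathcal{N}(j)$ to a $\cong$-equivalent vertex, but this is precisely the argument already recorded in Part~I of the theorem identifying $\cong$ with the CEP. As an alternative route that avoids a fresh invariance appeal, one can read the corollary directly off the proof of Theorem~\ref{th:preservesineq}: when $i \cong j$, the adapted sequence is bijective and degree-preserving at every order, so the ``strict dominance'' branch that produced $\overset{(N+1)}y_i(s) > \overset{(N+1)}y_j(s)$ never fires; all derivatives of $y_i$ and $y_j$ agree at any coincidence time and no separation can occur.
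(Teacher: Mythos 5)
Your proof is correct, but it takes a genuinely different route from the paper's. The paper derives Corollary~\ref{co:preservesCEP} purely by citing Theorem~\ref{th:characterization} together with Theorem~\ref{th:preservesineq}: since $i\cong j$ means $i\succeq j$ and $j\succeq i$, the order-preservation statement applied in both directions squeezes $y_i(t)=y_j(t)$ (your closing ``alternative route'' is essentially this intended argument). You instead give a direct, self-contained invariance proof for the synchrony set $C$: the first-order tangency identity $\dot y_i=\dot y_j$ on $C$ --- obtained from the class-preserving bijection $f^1_{ji}:\mathcal{N}(j)\to\mathcal{N}(i)$, or equivalently from the bare equitability of $\cong$ (equivalent nodes have the same number of neighbors in each class, and $\mathbf{y}$ is constant on classes) --- combined with uniqueness of solutions of the Lipschitz field restricted to the linear subspace. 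This buys two things. First, it bypasses the heavier machinery of Theorem~\ref{th:preservesineq}, whose proof needs the higher-order-derivative induction; for the equality statement the first-order computation suffices precisely because $C$ is cut out by linear equalities, so restriction-plus-uniqueness closes the argument (whereas for the inequality set $S$ first-order tangency at the boundary is genuinely insufficient, which is why the paper resorts to higher derivatives there). Second, it quietly repairs a small mismatch in the paper's one-line derivation: Theorem~\ref{th:preservesineq} assumes $\mathbf{y}(0)\in S$, i.e., that \emph{all} preorder inequalities are respected at $t=0$, while the corollary's hypothesis only imposes equalities on $\cong$-classes, so a literal application of that theorem does not cover every admissible initial condition; your argument needs only the equalities and hence applies verbatim.
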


Corollary~\ref{co:preservesCEP} follows from Theorems~\ref{th:characterization} and~\ref{th:preservesineq}.

\textbf{Remark.} The proof of Theorem~\ref{th:preservesineq} naturally extends to establish the result for a broader class of dynamical systems. In particular, let~$\overline{F}\,:\,\mathbb{R}\times\mathbb{R}^{N-1}\rightarrow \mathbb{R}$ be an analytic real valued function invariant under permutations on the second vector coordinate, i.e.,
\begin{equation}
\overline{F}\left(x_1;x_2,\ldots,x_N\right)= \overline{F}\left(x_1;p\left(x_2,\ldots,x_N\right)\right)
\end{equation}
for all permutations~$p\,:\,\mathbb{R}^{N-1}\rightarrow \mathbb{R}^{N-1}$ and for all~$\mathbf{x}=\left(x_1,\ldots,x_N\right)\in\mathbb{R}^{N}$. Assume also that~$\overline{F}$ is monotonous in the second vector coordinate, i.e.,
\begin{equation}
\mathbf{x} \geq \mathbf{y}\in\mathbb{R}^{N-1}\Rightarrow \overline{F}\left(x_1;\mathbf{x}\right)\geq \overline{F}\left(x_1;\mathbf{y}\right).
\end{equation}
Given a graph~$G$, let~$F_G=\left(F_1,F_2,\ldots,F_N\right)$ be the vector field induced by~$\overline{F}$ as follows
\begin{equation}\label{eq:construct2}
F_i\left(x\right)= \overline{F}\left(x_i, x_{i_1},\ldots,x_{i_k},0,\ldots,0\right)
\end{equation}
where,~$\mathcal{N}(i)\overset{\Delta}=\left\{j\sim i\,:\, j\in V\right\}=\left\{i_1,\ldots,i_k\right\}$ is the set of neighbors to~$i$ in the graph~$G$ (i.e.,~$F$ conveys the graph structure of~$G$). Note that the ordering of~$x_{i_1},\ldots,x_{i_k}$ in equation~\eqref{eq:construct2} is not relevant as~$\overline{F}$ is invariant under permutation of the associated coordinates. By evoking Fa\`{a} di Bruno's formula (refer, e.g., to~\cite{faadi}) for higher order derivatives of~$F_G$ and resorting to a proof similar to the one in Theorem~\ref{th:preservesineq}, one can establish the result in Theorem~\ref{th:preservesineq} to the class of dynamical systems
\begin{equation}
\frac{d}{dt}\mathbf{y}(t)=F_G(\mathbf{y}(t))
\end{equation}
with vector field~$F_G$. Note that~$\mathbf{F}$ is analytic and, thus, there exists unique solution~$\left(\mathbf{y}\left(t,\mathbf{y}(0)\right)\right)$ for each initial condition~$\mathbf{y}(0)\in\mathbb{R}^N$.

\section{Concluding remarks}\label{sec:conclusion}

In this paper, we constructed a preorder on the set of nodes of a graph. Such preorder entails the classical coarsest equitable partition of the graph. Moreover, it is preserved by certain monotonous dynamical systems over networks, which leads to the existence of an invariant set whose characterization relies on the graph structure of the dynamical system. Note that in light of Corollary~\ref{co:preservesCEP}, such dynamical systems admit a lower dimensional version with underlying dimension given by the number of colors associated with the CEP of the graph, i.e., by having the same initial conditions for nodes with same color, the state of such nodes will be synchronized for all time. Also, Theorem~\ref{th:preservesineq} tells us that one can lower-bound and/or upper-bound solutions to such dynamical systems by properly lower-bounding and/or upper-bounding the corresponding initial conditions. These observations provide with an application to approximate such dynamical systems over complex networks by lower-dimensional ones: i) find the CEP of the graph (which has a cost of about~$n \log n$, with~$n$ being the number of nodes); and ii) initialize evenly the state of nodes with same color so to lower-bound and/or upper-bound the arbitrary initial conditions of interest. In this case, the solutions (with arbitrary initial conditions) can be lower-bounded and/or upper-bounded by its lower dimensional versions for all time~$t$.

\small
\clearpage
\bibliographystyle{IEEEtran}
\bibliography{IEEEabrv,biblio}

% Generated by IEEEtran.bst, version: 1.14 (2015/08/26)
\begin{thebibliography}{10}
\providecommand{\url}[1]{#1}
\csname url@samestyle\endcsname
\providecommand{\newblock}{\relax}
\providecommand{\bibinfo}[2]{#2}
\providecommand{\BIBentrySTDinterwordspacing}{\spaceskip=0pt\relax}
\providecommand{\BIBentryALTinterwordstretchfactor}{4}
\providecommand{\BIBentryALTinterwordspacing}{\spaceskip=\fontdimen2\font plus
\BIBentryALTinterwordstretchfactor\fontdimen3\font minus
  \fontdimen4\font\relax}
\providecommand{\BIBforeignlanguage}[2]{{%
\expandafter\ifx\csname l@#1\endcsname\relax
\typeout{** WARNING: IEEEtran.bst: No hyphenation pattern has been}%
\typeout{** loaded for the language `#1'. Using the pattern for}%
\typeout{** the default language instead.}%
\else
\language=\csname l@#1\endcsname
\fi
#2}}
\providecommand{\BIBdecl}{\relax}
\BIBdecl

\bibitem{fractional}
D.~Ullman and E.~Scheinerman, \emph{Fractional Graph Theory: A Rational
  Approach to the Theory of Graphs}.\hskip 1em plus 0.5em minus 0.4em\relax
  Wiley, 1997.

\bibitem{barabasi2016network}
A.~Barab{\'a}si and M.~P{\~A}sfai, \emph{Network Science}.\hskip 1em plus 0.5em
  minus 0.4em\relax Cambridge University Press, 2016.

\bibitem{Javidi}
T.~J. A.~Lalitha and A.~D. Sarwate, ``Social learning and distributed
  hypothesis testing,'' \emph{IEEE Transactions on Information Theory},
  vol.~64, pp. 6161--6179, September 2018.

\bibitem{Alon}
U.~Alon, \emph{An Introduction to Systems Biology: Design Principles of
  Biological Circuits (1st ed.).}\hskip 1em plus 0.5em minus 0.4em\relax
  Chapman and Hall, 2006.

\bibitem{PAMI_Partial}
J.~Yan, S.-W. Zhang, C.~Zhang, W.~Huang, J.~Shi, and L.~Chen, ``Dynamical
  causality under latent confounders for biological network reconstruction,''
  \emph{IEEE Transactions on Pattern Analysis and Machine Intelligence},
  vol.~48, no.~6, pp. 6703--6719, 2026.

\bibitem{Vesp_Complex}
\BIBentryALTinterwordspacing
R.~Pastor-Satorras, C.~Castellano, P.~Van~Mieghem, and A.~Vespignani,
  ``Epidemic processes in complex networks,'' \emph{Rev. Mod. Phys.}, vol.~87,
  pp. 925--979, Aug 2015. [Online]. Available:
  \url{https://link.aps.org/doi/10.1103/RevModPhys.87.925}
\BIBentrySTDinterwordspacing

\bibitem{Bassett_Network_Neuroscience}
D.~Bassett and O.~Sporns, ``Network neuroscience,'' \emph{Nature Neuroscience},
  vol.~20, pp. 353--364, 02 2017.

\bibitem{Seizure}
\BIBentryALTinterwordspacing
R.~C. Scott, ``Brains, complex systems and therapeutic opportunities in
  epilepsy,'' \emph{Seizure}, vol.~90, pp. 155--159, 2021, lASSE-15th
  Anniversary of Latin American Convergent Achievements in Epileptology.
  [Online]. Available:
  \url{https://www.sciencedirect.com/science/article/pii/S1059131121000340}
\BIBentrySTDinterwordspacing

\bibitem{MaterassiSalapakaCDC2015}
D.~Materassi and M.~V. Salapaka, ``Identification of network components in
  presence of unobserved nodes,'' in \emph{Proc. IEEE Conference on Decision
  and Control (CDC)}, Osaka, Japan, Dec 2015, pp. 1563--1568.

\bibitem{tomo_isit}
A.~Santos, V.~Matta, and A.~H. Sayed, ``Consistent tomography over diffusion
  networks under the low-observability regime,'' in \emph{Proc. IEEE
  International Symposium on Information Theory}, Colorado, USA, June 2018, pp.
  1--5.

\bibitem{SMachado}
S.~Machado, A.~Sridhar, P.~Gil, J.~Henriques, J.~M.~F. Moura, and A.~Santos,
  ``Recovering the graph underlying networked dynamical systems under
  partial-observability: a deep learning approach,'' in \emph{Proceedings of
  the 37th AAAI Coference on Artificial Intelligence}.\hskip 1em plus 0.5em
  minus 0.4em\relax AAAI, 2023.

\bibitem{porterdynamical}
M.~Porter and J.~Gleeson, \emph{Dynamical Systems on Networks: A
  Tutorial}.\hskip 1em plus 0.5em minus 0.4em\relax Springer International
  Publishing, 2016.

\bibitem{newman}
M.~Newman, \emph{Networks: an Introduction}.\hskip 1em plus 0.5em minus
  0.4em\relax Oxford Scholarship Online, 2010.

\bibitem{augusto_qualitative}
A.~Santos, J.~M.~F. Moura, and J.~Xavier, ``Bi-virus {SIS} epidemics over
  networks: Qualitative analysis,'' \emph{IEEE Transactions on Network Science
  and Engineering}, vol.~2, no.~1, pp. 17--29, Jan 2015.

\bibitem{kurtzi}
\BIBentryALTinterwordspacing
M.~Draief and L.~Massouli\'e, \emph{Networks and Epidemics}, 2006. [Online].
  Available:
  \url{http://www.liafa.jussieu.fr/~mairesse/MPRI/2007/coursmassoulie.pdf}
\BIBentrySTDinterwordspacing

\bibitem{Melanie}
\BIBentryALTinterwordspacing
M.~Mitchell, ``\BIBforeignlanguage{English}{Complexity: a guided tour},''
  \emph{\BIBforeignlanguage{English}{Genetic Programming and Evolvable
  Machines}}, vol.~11, no.~1, pp. 127--128, 2010. [Online]. Available:
  \url{http://dx.doi.org/10.1007/s10710-009-9097-y}
\BIBentrySTDinterwordspacing

\bibitem{implementing}
S.~Skiena, \emph{Implementing discrete mathematics: combinatorics and graph
  theory with Mathematica}, ser. Advanced book program.\hskip 1em plus 0.5em
  minus 0.4em\relax Addison-Wesley, 1990.

\bibitem{aldous1989}
\BIBentryALTinterwordspacing
D.~Aldous, ``Hitting times for random walks on vertex-transitive graphs,''
  \emph{Mathematical Proceedings of the Cambridge Philosophical Society}, vol.
  106, no.~1, pp. 179--191, 007 1989. [Online]. Available:
  \url{https://www.cambridge.org/core/article/div-class-title-hitting-times-for-random-walks-on-vertex-transitive-graphs-div/61CA169569078208E98F7BDD569136B5}
\BIBentrySTDinterwordspacing

\bibitem{faadi}
W.~P. Johnson, ``The curious history of {F}a\`{a} di {B}runo's formula,''
  \emph{The American Mathematical Monthly}, vol. 109, pp. 217--234, 2002.

\end{thebibliography}

\end{document}